\renewcommand{\epsilon}{\varepsilon}
\renewcommand{\theta}{\vartheta}
\renewcommand{\phi}{\varphi}
\renewcommand{\rho}{\varrho}
\newtheorem{Def}{Definition}[section]
\newenvironment{definition}{\begin{Def} \rm}{\end{Def}}
\newtheorem{lemma}[Def]{Lemma}
\newtheorem{proposition}[Def]{Proposition}
\newtheorem{theorem}[Def]{Theorem}
\newtheorem{remark}[Def]{Remark}
\newcommand{\axiom}[1]{\item[\rm (#1)]}
\newcommand{\sep}{;}
\newcommand{\Naturals}{{\mathbb N}}
\renewcommand{\setminus}{\mathbin{\raisebox{0.2ex}{$\smallsetminus$}}}
\newcommand{\disjointunion}{\mathbin{\dot\cup}}
\newcommand{\binaryoperation}{\mathbin{\diamond}} % example binary operation
\newcommand{\So}{S}                              % original universe
\newcommand{\withoutzero}[1]{#1^\star}           % chain without 0
\newcommand{\oneone}{(1,\!1)}
\renewcommand{\leq}{\leqslant}                   % total order (better symbol)
\renewcommand{\geq}{\geqslant}                   % total order (better symbol)
\newcommand{\congruence}{\mathbin{\approx}   }   % general congruence
\newcommand{\congrcl}[1]{{\langle #1 \rangle_\approx}} % general congruence class
\newcommand{\rees}{\mathbin{\approx}}            % Rees congruence
\newcommand{\Se}{\bar{S}}                        % extended universe
\newcommand{\zeroo}{0}                           % zero in the original universe Se
\newcommand{\zeroe}{0}                           % zero in the extended universe Se
\newcommand{\atom}{\alpha}                       % atom
\newcommand{\atome}{\alpha}                      % atom in the extended universe Se
\newcommand{\mo}{\mathbin{\odot}}                % monoidal operation on So
\newcommand{\me}{\mathbin{\bar\odot}}            % monoidal operation on Se
\newcommand{\support}{\mathcal P}                % >=f area of Qe
\newcommand{\cosupport}{\mathcal Q}              % >=f area of Qe
\newcommand{\eqo}{\mathbin{\sim}}                % level equivalence on Qo
\newcommand{\noteqo}{\mathbin{\not\sim}}         % negated level equivalence on Qo
\newcommand{\eqp}{\mathbin{\dot{\sim}}}          % provisional level equivalence on Qe
\newcommand{\noteqp}{\mathbin{\dot{\nsim}}}      % negated provisional level equivalence on Qe
\newcommand{\eqe}{\mathbin{\bar{\sim}}}          % level equivalence on Qe
\newcommand{\idp}{\epsilon}                      % idempotent
\newcommand{\leftidp}{\idp_l}
\newcommand{\rightidp}{\idp_r}
\newcommand{\leqcomp}{\mathbin{\trianglelefteqslant}}  % componentwise order
\newcommand{\geqcomp}{\mathbin{\trianglerighteqslant}}
\newcommand{\equ}{\mathbin{\sim}}                 % general equivalence
\newcommand{\equcl}[1]{{\langle #1 \rangle_\sim}} % general congruence class
\newcommand{\eqpcl}[1]{{\langle #1 \rangle_{\dot{\sim}}}}
\newcommand{\eqecl}[1]{{\langle #1 \rangle_{\bar{\sim}}}}
\newcommand{\preordersim}{\mathbin{\leq_\sim}}
\newcommand{\preorderblock}{\mathbin{\trianglelefteqslant_\sim}}
\newcommand{\preorderblockp}{\mathbin{\trianglelefteqslant_{\dot{\sim}}}}
\newcommand{\preorder}{\preccurlyeq}
\newcommand{\preorderequ}{\approx}
\newcommand{\preorderequcl}[1]{{\langle #1 \rangle_\approx}}
\newcommand{\fntomonoid}{f.n.\ tomonoid}
\tikzset{rectangle in table/.style={color=black,ultra thick}}
\edef\reccircsize{0.4}
\tikzset{node text/.style={text height=1.5ex,text depth=0.25ex}}
\tikzset{tom table line/.style={style=thin}}
\tikzset{lvl 0/.style={color=black}}
\tikzset{lvl t/.style={color=black}}
\tikzset{lvl u/.style={color=black}}
\tikzset{lvl v/.style={color=black}}
\tikzset{lvl w/.style={color=black}}
\tikzset{lvl x/.style={color=black}}
\tikzset{lvl y/.style={color=black}}
\tikzset{lvl z/.style={color=black}}
\tikzset{lvl 1/.style={color=black}}
\tikzset{lvl fill 0/.style={color=gray, fill=white}}
\tikzset{lvl fill t/.style={color=black, fill=lightgray}}
\tikzset{lvl fill u/.style={color=black, fill=lightgray}}
\tikzset{lvl fill v/.style={color=black, fill=lightgray}}
\tikzset{lvl fill w/.style={color=black, fill=lightgray}}
\tikzset{lvl fill x/.style={color=black, fill=lightgray}}
\tikzset{lvl fill y/.style={color=black, fill=lightgray}}
\tikzset{lvl fill z/.style={color=black, fill=lightgray}}
\tikzset{lvl fill 1/.style={color=black, fill=lightgray}}
\tikzset{lvl a/.style={color=black}}
\tikzset{lvl fill a/.style={color=black, fill=white}}
\tikzset{lvl ?/.style={color=black}}
\tikzset{lvl fill ?/.style={color=black, fill=white}}
\tikzset{lvl !/.style={color=black}}
\tikzset{lvl fill !/.style={color=black, fill=white}}
\tikzset{rectangle line/.style={color=gray,ultra thick, dashed}}
\tikzset{rectangle circle/.style={color=black,ultra thick}}
\tikzset{levelset antecedent/.style={ultra thick}}
\tikzset{levelset consequent/.style={dotted, ultra thick}}
\def\tomcellwrite[#1](#2,#3,#4){
    \begin{scope}
        \pgfmathparse{#2-0.5}
        \edef\xx{\pgfmathresult}
        \pgfmathparse{#3-0.5}
        %\pgfmathparse{-#3+0.5}
        \edef\yy{\pgfmathresult}
        \node[node text,#1] at (\xx,\yy) {#4};
    \end{scope}
}
\def\tomcellwriteright[#1](#2,#3,#4){
    \begin{scope}
        \edef\xx{#2-1}
        \pgfmathparse{#3-0.5}
        %\pgfmathparse{-#3+0.5}
        \edef\yy{\pgfmathresult}
        \node[node text, right, #1] at (\xx,\yy) {#4};
    \end{scope}
}
\def\tomcellwriteleft[#1](#2,#3,#4){
    \begin{scope}
        \pgfmathparse{#2}
        \edef\xx{\pgfmathresult}
        \pgfmathparse{#3-0.5}
        %\pgfmathparse{-#3+0.5}
        \edef\yy{\pgfmathresult}
        \node[node text, left, #1] at (\xx,\yy) {#4};
    \end{scope}
}
\def\tomcellfill[#1](#2,#3,#4,#5){
    \begin{scope}
        \pgfmathparse{#2}
        \edef\xa{\pgfmathresult}
        \pgfmathparse{#3-1}
        \edef\xb{\pgfmathresult}
        \pgfmathparse{#4}
        \edef\ya{\pgfmathresult}
        \pgfmathparse{#5-1}
        \edef\yb{\pgfmathresult}
        %\pgfmathparse{-#4}
        %\edef\ya{\pgfmathresult}
        %\pgfmathparse{-#5+1}
        %\edef\yb{\pgfmathresult}
        \draw[#1] (\xa,\ya) rectangle (\xb,\yb);
    \end{scope}
}
\newcommand{\tomunit}[1]{
    \begin{scope}
        \pgfmathparse{-#1}
        \edef\zax{\pgfmathresult}
        \edef\ax{1}
        \tomcellwrite[](0,\ax,$1$);
        \tomcellwrite[](\ax,0,$1$);
        \tomcellwrite[](0,0,$1$);
    \end{scope}
}
\newcommand{\tomzero}[1]{
    \begin{scope}
        \pgfmathparse{-#1}
        \edef\zax{\pgfmathresult}
        \edef\ax{1}
        \pgfmathparse{-#1+1}
        \edef\ze{\pgfmathresult}
        \tomcellwrite[](\ze,\ax,$0$);
        \tomcellwrite[](\ax,\ze,$0$);
        \tomcellwrite[](\ze,0,$0$);
        \tomcellwrite[](0,\ze,$0$);
    \end{scope}
}
\newcommand{\tomtable}[1]{
    \begin{scope}
        \foreach \a in {0, 1, ..., #1}{
            \draw[tom table line] (0,-\a) -- (-#1,-\a);
            \draw[tom table line] (-\a,0) -- (-\a,-#1);
            %\draw[tom table line] (0,\a) -- (-#1,\a);
            %\draw[tom table line] (-\a,0) -- (-\a,#1);
        }
    \end{scope}
}
\newcommand{\tomtablefilled}[1]{
    \begin{scope}
        \foreach \tomrow [count=\tomsize] in #1 {}
        \foreach \tomrow [count=\i] in #1 {
            \begin{scope}
                \pgfmathparse{1-\i}
                %\pgfmathparse{\i-\tomsize}
                %\pgfmathparse{\i}
                \edef\y{\pgfmathresult}
                \foreach \val [count=\j] in \tomrow {
                    \pgfmathparse{\j-\tomsize}
                    \edef\x{\pgfmathresult}
                    \tomcellfill[lvl fill \val](\x,\x,\y,\y);
                    \ifthenelse{\equal{\val}{a}}{
                        \tomcellwrite[lvl \val](\x,\y,{$\atom$});
                    }{
                        \ifthenelse{\equal{\val}{?}}{
                            %\tomcellwrite[lvl \val](\x,\y,{$?$});
                        }{
                            \ifthenelse{\equal{\val}{!}}{
                                \tomcellwrite[lvl \val](\x,\y,{$\small\textbf{!}$});
                            }{
                                \tomcellwrite[lvl \val](\x,\y,{$\val$});
                            }
                        }
                    }
                }
            \end{scope}
        }
        \tomtable{\tomsize}
    \end{scope}
}
\newcommand{\tomaxis}[1]{
    \begin{scope}
        \foreach \tomrow [count=\tomsize] in #1 {}
        \foreach \tomrow [count=\i] in #1 {
            \begin{scope}
                \pgfmathparse{1-\i}
                %\pgfmathparse{\i-\tomsize}
                \edef\y{\pgfmathresult}
                \foreach \val [count=\j] in \tomrow {
                    \pgfmathparse{\j-\tomsize}
                    \edef\x{\pgfmathresult}
                    \edef\ax{1}
                    %\tomcellwrite[lvl \val](\x,1,{$\val$});
                    %\tomcellwrite[lvl \val](1,\x,{$\val$});
                    \ifthenelse{\equal{\val}{a}}{
                        \tomcellwrite[lvl \val](\x,\ax,{$\atom$});
                        \tomcellwrite[lvl \val](\ax,\x,{$\atom$});
                    }{
                        \tomcellwrite[lvl \val](\x,\ax,{$\val$});
                        \tomcellwrite[lvl \val](\ax,\x,{$\val$});
                    }
                }
                \breakforeach
            \end{scope}
        }
    \end{scope}
}
\newcommand{\tomsupp}[1]{
    \begin{scope}
        \foreach \tomrow [count=\tomsize] in #1 {}
        \foreach \tomrow [count=\i] in #1 {
            \begin{scope}
                \pgfmathparse{1-\i}
                \edef\y{\pgfmathresult}
                \foreach \val [count=\j] in \tomrow {
                    \pgfmathparse{\j-\tomsize}
                    \edef\x{\pgfmathresult}
                    \ifthenelse{\equal{\val}{0}}{
                        \tomcellfill[color=white, fill=white](\x,\x,\y,\y);
                    }{
                        %\tomcellfill[color=lightgray, fill=lightgray](\x,\x,\y,\y);
                        \ifthenelse{\equal{\val}{a}}{
                            \tomcellfill[color=white, fill=white](\x,\x,\y,\y);
                            \tomcellwrite[color=black, fill=white](\x,\y,$\atom$);
                        }{
                            \tomcellfill[color=lightgray, fill=lightgray](\x,\x,\y,\y);
                        }
                    }
                }
            \end{scope}
        }
    \end{scope}
}
\def\tomrectangleside[#1](#2,#3,#4,#5){
    \begin{scope}
        \edef\rr{\reccircsize}
        \edef\xa{#2}
        \edef\ya{#3}
        \edef\xb{#4}
        \edef\yb{#5}
        \pgfmathparse{(\xa+\xb)/2}
        \edef\xc{\pgfmathresult}
        \pgfmathparse{(\ya+\yb)/2}
        \edef\yc{\pgfmathresult}
        \pgfmathparse{(abs(\xa-\xb)/2)-\rr}
        \edef\lenhori{\pgfmathresult}
        \pgfmathparse{(abs(\ya-\yb)/2)-\rr}
        \edef\lenvert{\pgfmathresult}
        \pgfmathparse{veclen(\xb-\xa,\yb-\ya)}
        \edef\ll{\pgfmathresult}
        \pgfmathparse{(\ll/2)-\rr}
        \edef\rc{\pgfmathresult}
        \begin{scope}
            \clip (\xc,\yc) circle (\rc);
            \draw[#1] (\xa,\ya) -- (\xb,\yb);
        \end{scope}
    \end{scope}
}
\def\tomrectangle[#1](#2,#3,#4,#5){
    \begin{scope}
        \edef\rr{\reccircsize}
        \pgfmathparse{#2-0.5}
        \edef\xa{\pgfmathresult}
        \pgfmathparse{#3-0.5}
        \edef\xb{\pgfmathresult}
        \pgfmathparse{#4-0.5}
        \edef\ya{\pgfmathresult}
        \pgfmathparse{#5-0.5}
        \edef\yb{\pgfmathresult}

        \ifthenelse{\equal{\xa}{\xb}}{
            \ifthenelse{\equal{\ya}{\yb}}{
                \draw[rectangle circle] (\xa,\ya) circle (\rr);
            }{
                \tomrectangleside[rectangle line](\xa,\ya,\xa,\yb)
                \draw[rectangle circle] (\xa,\ya) circle (\rr);
                \draw[rectangle circle] (\xa,\yb) circle (\rr);
            }
        }{
            \ifthenelse{\equal{\ya}{\yb}}{
                \tomrectangleside[rectangle line](\xa,\ya,\xb,\ya)
                \draw[rectangle circle] (\xa,\ya) circle (\rr);
                \draw[rectangle circle] (\xb,\ya) circle (\rr);
            }{
                \tomrectangleside[rectangle line](\xa,\ya,\xb,\ya)
                \tomrectangleside[rectangle line](\xa,\yb,\xb,\yb)
                \tomrectangleside[rectangle line](\xa,\ya,\xa,\yb)
                \tomrectangleside[rectangle line](\xb,\ya,\xb,\yb)
                \draw[rectangle circle] (\xa,\ya) circle (\rr);
                \draw[rectangle circle] (\xb,\ya) circle (\rr);
                \draw[rectangle circle] (\xa,\yb) circle (\rr);
                \draw[rectangle circle] (\xb,\yb) circle (\rr);
            }
        }
    \end{scope}
}
\def\tomlevelset[#1](#2,#3,#4,#5){
    \begin{scope}
        \edef\rr{\reccircsize}
        \pgfmathparse{#2-0.5}
        \edef\xa{\pgfmathresult}
        \pgfmathparse{#3-0.5}
        %\pgfmathparse{-#3+0.5}
        \edef\ya{\pgfmathresult}
        \pgfmathparse{#4-0.5}
        \edef\xb{\pgfmathresult}
        \pgfmathparse{#5-0.5}
        %\pgfmathparse{-#5+0.5}
        \edef\yb{\pgfmathresult}
        \pgfmathparse{(\xa+\xb)/2}
        \edef\xc{\pgfmathresult}
        \pgfmathparse{(\ya+\yb)/2}
        \edef\yc{\pgfmathresult}
        \pgfmathparse{veclen(\xb-\xa,\yb-\ya)}
        \edef\ll{\pgfmathresult}
        \pgfmathparse{(\ll/2)-\rr}
        \edef\rc{\pgfmathresult}
        \begin{scope}
            \clip (\xc,\yc) circle (\rc);
            \draw[#1] (\xa,\ya) -- (\xb,\yb);
        \end{scope}
    \end{scope}
}
\def\tomreidfig[#1](#2,#3,#4,#5,#6){
    \begin{scope}
        \edef\aa{#2} 
        \edef\bb{#3}
        \edef\cc{#4}
        \edef\dd{#5}
        \edef\ee{#6}
        \edef\un{0} % unit
        \tomrectangle[#1](\cc,\un,\dd,\bb);
        \tomrectangle[#1](\ee,\bb,\aa,\un);
        \tomlevelset[#1,levelset consequent](\cc,\dd,\ee,\aa);
        \tomlevelset[#1,levelset antecedent](\un,\dd,\bb,\aa);
        \tomlevelset[#1,levelset antecedent](\cc,\bb,\ee,\un);
        \tomlevelset[#1,levelset antecedent](\un,\bb,\bb,\un);
    \end{scope}
}
\edef\tomtableIX{
    {0,t,u,v,w,x,y,z,1},
    {0,0,t,u,u,v,w,x,z},
    {0,0,0,t,t,u,u,v,y},
    {0,0,0,t,t,u,u,v,x},
    {0,0,0,0,0,t,t,u,w},
    {0,0,0,0,0,t,t,u,v},
    {0,0,0,0,0,0,0,t,u},
    {0,0,0,0,0,0,0,0,t},
    {0,0,0,0,0,0,0,0,0}}
\edef\tomtableVIII{
    {0,u,v,w,x,y,z,1},
    {0,0,u,u,v,w,x,z},
    {0,0,0,0,u,u,v,y},
    {0,0,0,0,u,u,v,x},
    {0,0,0,0,0,0,u,w},
    {0,0,0,0,0,0,u,v},
    {0,0,0,0,0,0,0,u},
    {0,0,0,0,0,0,0,0}}
\edef\tomtableVII{
    {0,v,w,x,y,z,1},
    {0,0,0,v,w,x,z},
    {0,0,0,0,0,v,y},
    {0,0,0,0,0,v,x},
    {0,0,0,0,0,0,w},
    {0,0,0,0,0,0,v},
    {0,0,0,0,0,0,0}}
\edef\tomtableVI{
    {0,w,x,y,z,1},
    {0,0,0,w,x,z},
    {0,0,0,0,0,y},
    {0,0,0,0,0,x},
    {0,0,0,0,0,w},
    {0,0,0,0,0,0}}
\edef\tomtableV{
    {0,x,y,z,1},
    {0,0,0,x,z},
    {0,0,0,0,y},
    {0,0,0,0,x},
    {0,0,0,0,0}}
\edef\tomtableIV{
    {0,y,z,1},
    {0,0,0,z},
    {0,0,0,y},
    {0,0,0,0}}
\edef\tomtableIII{
    {0,z,1},
    {0,0,z},
    {0,0,0}}
\edef\tomtableII{
    {0,1},
    {0,0}}
\edef\tomtableI{
    {1}}
\edef\tomsupparch{
    {0,a,1,1,1,1,1,1,1,1,1},
    {0,0,0,1,1,1,1,1,1,1,1},
    {0,0,0,0,0,1,1,1,1,1,1},
    {0,0,0,0,0,0,1,1,1,1,1},
    {0,0,0,0,0,0,0,1,1,1,1},
    {0,0,0,0,0,0,0,0,1,1,1},
    {0,0,0,0,0,0,0,0,1,1,1},
    {0,0,0,0,0,0,0,0,0,1,1},
    {0,0,0,0,0,0,0,0,0,0,1},
    {0,0,0,0,0,0,0,0,0,0,a},
    {0,0,0,0,0,0,0,0,0,0,0}}
\edef\tomsuppnonarch{
    {0,a,1,1,1,1,1,1,1,1,1},
    {0,0,0,1,1,1,1,1,1,1,1},
    {0,0,0,0,0,1,1,1,1,1,1},
    {0,0,0,0,0,0,1,1,1,1,1},
    {0,0,0,0,0,0,0,1,1,1,1},
    {0,0,0,0,0,0,0,1,1,1,1},
    {0,0,0,0,0,0,0,0,1,1,1},
    {0,0,0,0,0,0,0,0,a,a,1},
    {0,0,0,0,0,0,0,0,a,a,1},
    {0,0,0,0,0,0,0,0,a,a,a},
    {0,0,0,0,0,0,0,0,0,0,0}}
\newcommand{\figpartition}{
    \begin{tikzpicture}[scale=0.45]
        \begin{scope}[shift={(0,0)}]
            \tomaxis{\tomtableIX}
            \tomtablefilled{\tomtableIX}
        \end{scope}
    \end{tikzpicture}
}
\newcommand{\figrees}{
    \begin{tikzpicture}[scale=0.4]
        \begin{scope}[shift={(0,0)}]
            \tomaxis{\tomtableVIII}
            \tomtablefilled{\tomtableVIII}
        \end{scope}
        \begin{scope}[shift={(10,0)}]
            \tomaxis{\tomtableVII}
            \tomtablefilled{\tomtableVII}
        \end{scope}
        \begin{scope}[shift={(19,0)}]
            \tomaxis{\tomtableVI}
            \tomtablefilled{\tomtableVI}
        \end{scope}
        \begin{scope}[shift={(-3,-11)}]
            \tomaxis{\tomtableV}
            \tomtablefilled{\tomtableV}
        \end{scope}
        \begin{scope}[shift={(4,-11)}]
            \tomaxis{\tomtableIV}
            \tomtablefilled{\tomtableIV}
        \end{scope}
        \begin{scope}[shift={(10,-11)}]
            \tomaxis{\tomtableIII}
            \tomtablefilled{\tomtableIII}
        \end{scope}
        \begin{scope}[shift={(15,-11)}]
            \tomaxis{\tomtableII}
            \tomtablefilled{\tomtableII}
        \end{scope}
        \begin{scope}[shift={(19,-11)}]
            \tomaxis{\tomtableI}
            \tomtablefilled{\tomtableI}
        \end{scope}
    \end{tikzpicture}
}
\newcommand{\figreidemeister}{
    \begin{tikzpicture}[scale=0.45]
        \begin{scope}
            \edef\tomsize{11}
            \edef\ax{1} % axis
            \edef\aax{2} % above axis
            \edef\un{0} % unit
            \edef\aa{-6} 
            \edef\bb{-3}
            \edef\cc{-5}
            \edef\dd{-8}
            \edef\ee{-7}
            \tomtable{\tomsize}
            \tomunit{\tomsize}
            \tomzero{\tomsize}
            \tomcellfill[color=white, fill=white, ultra thick](-7.2,-9.8,-6.7,-7.3)
            \tomcellfill[color=white, fill=white, ultra thick](-5.2,-7.8,-8.7,-9.3)
            \tomreidfig[rectangle in table](\aa,\bb,\cc,\dd,\ee);
            %\tomcellwrite[](\dd,\aax,$a\mo b$)
            %\tomcellwrite[](\dd,\ax,$=d$)
            \tomcellwriteright[](\ax,\dd,$a\mo b$)
            \tomcellwrite[](\ax,\aa,$a$)
            \tomcellwrite[](\ax,\bb,$b$)
            \tomcellwrite[](\bb,\ax,$b$)
            \tomcellwrite[](\cc,\ax,$c$)
            \tomcellwrite[](\un,\bb,$b$)
            \tomcellwrite[](\un,\dd,$d$)
            \tomcellwrite[](\bb,\un,$b$)
            \tomcellwrite[](\ee,\un,$e$)
            \tomcellwrite[](\bb,\aa,$d$)
            \tomcellwrite[](\cc,\bb,$e$)
            \tomcellwrite[](\ee,\ax,$b\mo c$)
            \edef\temptext{$(a\mo b)\mo c$}
            \tomcellwriteleft[](\cc,\dd-1,\temptext)
            %\tomcellwriteleft[](\cc,\dd+1,\temptext)
            \edef\temptext{$a\mo(b\mo c)$}
            \tomcellwriteleft[](\ee,\aa-1,\temptext)
            %\tomcellwriteleft[](\ee,\aa+1,\temptext)
        \end{scope}
    \end{tikzpicture}
}
\newcommand{\figarch}{
    \begin{tikzpicture}[scale=0.45]
        \begin{scope}
            \foreach \tomrow [count=\tomsize] in \tomsupparch {}
            \pgfmathparse{\tomsize+3}
            \edef\tomshift{\pgfmathresult}
            \begin{scope}[shift={(\tomshift,0)}]
                \edef\ax{1} % axis
                \edef\un{0} % unit
                \edef\ca{-1} % co-atom
                \pgfmathparse{1-\tomsize}
                \edef\ze{\pgfmathresult} % zero
                \pgfmathparse{2-\tomsize}
                \edef\at{\pgfmathresult} % atom
                \edef\aa{-5} 
                \edef\bb{-3}
                \edef\cc{\ca}
                \edef\dd{\at}
                \edef\ee{-5}
                \tomsupp{\tomsupparch}
                %\tomcellwriteright[](\dd,\ax,$d=\atom$)
                \tomcellwrite[](\ax,\bb,$b$)
                \tomcellwrite[](\ax,\aa,$a$)
                \tomcellwrite[](\bb,\ax,$b$)
                \tomcellwrite[](\ee,\ax,$e$)
                \tomcellwrite[](\cc,\ax,$c$)
                \tomcellwrite[](\un,\bb,$b$)
                \tomcellwrite[](\un,\dd,$\atom$)
                \tomcellwrite[](\bb,\un,$b$)
                \tomcellwrite[](\ee,\un,$e$)
                \tomcellwrite[](\bb,\aa,$\atom$)
                \tomcellwrite[](\cc,\bb,$e$)
                \tomcellwrite[](\ax,\dd,$d$)
                \tomcellwrite[lvl 0](\ee,\aa,$0$)
                \tomcellwrite[lvl 0](\cc,\dd,$0$)
                \tomreidfig[](\aa,\bb,\cc,\dd,\ee)
                \tomunit{\tomsize}
                \tomzero{\tomsize}
                %\tomatom{\tomsize}
                \tomtable{\tomsize}
            \end{scope}
            \begin{scope}
                \edef\ax{1} % axis
                \edef\un{0} % unit
                \edef\aa{-4} 
                \edef\bb{-2}
                \edef\cc{-4}
                \edef\dd{-7}
                \edef\ee{-6}
                \tomsupp{\tomsupparch}
                \tomcellwrite[](\ax,\bb,$b$)
                \tomcellwrite[](\ax,\dd,$d$)
                \tomcellwrite[](\ax,\aa,$a$)
                \tomcellwrite[](\bb,\ax,$b$)
                \tomcellwrite[](\ee,\ax,$e$)
                \tomcellwrite[](\cc,\ax,$c$)
                \tomcellwrite[](\un,\bb,$b$)
                \tomcellwrite[](\un,\dd,$d$)
                \tomcellwrite[](\bb,\un,$b$)
                \tomcellwrite[](\ee,\un,$e$)
                \tomcellwrite[](\bb,\aa,$d$)
                \tomcellwrite[](\cc,\bb,$e$)
                \tomzero{\tomsize}
                %\tomatom{\tomsize}
                \tomtable{\tomsize}
                \tomreidfig[](\aa,\bb,\cc,\dd,\ee);
                \tomunit{\tomsize}
            \end{scope}
        \end{scope}
    \end{tikzpicture}
}
\newcommand{\fignonarch}{
    \begin{tikzpicture}[scale=0.45]
        \begin{scope}
            \foreach \tomrow [count=\tomsize] in \tomsuppnonarch {}
            \pgfmathparse{\tomsize+3}
            \edef\tomshift{\pgfmathresult}
            \begin{scope}
                \edef\ax{1} % axis
                \edef\un{0} % unit
                \pgfmathparse{2-\tomsize}
                \edef\at{\pgfmathresult} % atom
                \edef\pp{-2} % idempotent
                \edef\aa{-5} 
                \edef\bb{-4}
                \pgfmathparse{\pp-1}
                \edef\cc{\pgfmathresult}
                \edef\dd{\at}
                \edef\ee{-7}
                \tomsupp{\tomsuppnonarch}
                \tomcellwrite[](\ax,\bb,$b$)
                \tomcellwrite[](\ax,\aa,$a$)
                \tomcellwrite[](\ax,\dd,$d$)
                \tomcellwrite[](\bb,\ax,$b$)
                \tomcellwrite[](\pp,\ax,$\idp_r$);
                \tomcellwrite[](\ee,\ax,$e$)
                \tomcellwrite[](\cc,\ax,$c$)
                \tomcellwrite[](\un,\bb,$b$)
                \tomcellwrite[](\un,\dd,$\atom$)
                \tomcellwrite[](\bb,\un,$b$)
                \tomcellwrite[](\ee,\un,$e$)
                \tomcellwrite[](\bb,\aa,$\atom$)
                \tomcellwrite[](\cc,\bb,$e$)
                \tomcellwrite[lvl 0](\ee,\aa,$0$)
                \tomcellwrite[lvl 0](\cc,\dd,$0$)
                \tomreidfig[rectangle in table](\aa,\bb,\cc,\dd,\ee);
                \tomunit{\tomsize}
                \tomzero{\tomsize}
                %\tomatom{\tomsize}
                \tomtable{\tomsize}
            \end{scope}
            \begin{scope}[shift={(\tomshift,0)}]
                \edef\ax{1} % axis
                \edef\un{0} % unit
                \pgfmathparse{1-\tomsize}
                \edef\ze{\pgfmathresult} % zero
                \pgfmathparse{2-\tomsize}
                \edef\at{\pgfmathresult} % atom
                \edef\pp{-2} % idempotent
                \edef\aa{-7} 
                \edef\bb{-4}
                \edef\cc{\pp}
                \edef\dd{\at}
                \edef\ee{-6}
                \tomsupp{\tomsuppnonarch}
                \tomcellwrite[](\ax,\bb,$b$)
                \tomcellwrite[](\ax,\dd,$\atom$)
                \tomcellwrite[](\pp,\ax,$c{\footnotesize =}\idp_r$);
%                \tomcellwrite[](\pp,\un,$\idp_l$);
                \tomcellwrite[](\ax,\aa,$a$)
                \tomcellwrite[](\bb,\ax,$b$)
                \tomcellwrite[](\ee,\ax,$e$)
                \tomcellwrite[](\un,\bb,$b$)
                \tomcellwrite[](\bb,\un,$b$)
                \tomcellwrite[](\ee,\un,$e$)
                \tomcellwrite[](\cc,\bb,$e$)
                \tomzero{\tomsize}
                \tomreidfig[rectangle in table, color=gray](\aa,\bb,\cc,\ze,\ee);
                %\tomcellfill[color=lightgray, fill=lightgray](\dd,\dd,\un,\un);
                %\tomcellfill[color=lightgray, fill=lightgray](\dd,\dd,\cc,\cc);
                \tomcellwrite[](\un,\dd,$\atom$)
                \tomcellwrite[](\cc,\dd,$\atom$)
                \tomcellwrite[color=black](\un,\ze,$0$)
                \tomcellwrite[color=black](\cc,\ze,$0$)
                \tomtable{\tomsize}
                \tomreidfig[rectangle in table](\aa,\bb,\cc,\dd,\ee);
                \tomunit{\tomsize}
                %\tomatom{\tomsize}
            \end{scope}
        \end{scope}
    \end{tikzpicture}
}
\edef\tomtableCntexParent{
    {0,y,z,1},
    {0,0,z,z},
    {0,0,y,y},
    {0,0,0,0}}
\edef\tomtableCntexStepA{
    {0,a,y,z,1},
    {0,a,a,z,z},
    {0,0,?,y,y},
    {0,0,0,0,a},
    {0,0,0,0,0}}
\edef\tomtableCntexStepB{
    {0,a,y,z,1},
    {0,a,0,z,z},
    {0,0,?,y,y},
    {0,0,0,0,a},
    {0,0,0,0,0}}
\newcommand{\figcntex}{
    \begin{tikzpicture}[scale=0.45]
        \edef\zax{-3}
        \edef\abc{2.5}
        \edef\de{2}
        \begin{scope}[shift={(0,0)}]
            \tomaxis{\tomtableCntexParent}
            \tomtablefilled{\tomtableCntexParent}
            %\tomcellwrite[](\zax,\abc,{a)})
        \end{scope}
        \edef\zax{-4}
        \begin{scope}[shift={(7,0)}]
            \tomaxis{\tomtableCntexStepA}
            \tomtablefilled{\tomtableCntexStepA}
            %\tomcellwrite[](\zax,\abc,{b)})
        \end{scope}
        \begin{scope}[shift={(14,0)}]
            \edef\un{0} 
            \edef\aa{-1} 
            \edef\bb{-2}
            \edef\cc{-1}
            \edef\dd{-2}
            \edef\ee{-3}
            \tomaxis{\tomtableCntexStepB}
            \tomtablefilled{\tomtableCntexStepB}
            \tomrectangle[rectangle in table](\bb,\dd,\un,\cc);
            \tomrectangle[rectangle in table](\un,\aa,\bb,\ee);
            %\tomcellwrite[](\aa,\de,$a$)
            %\tomcellwrite[](\bb,\de,$b$)
            %\tomcellwrite[](\de,\bb,$b$)
            %\tomcellwrite[](\de,\cc,$c$)
            %\tomcellwrite[](\dd,{\de+1},$d$)
            %\tomcellwrite[](\de,\ee,$e$)
            %\tomcellwrite[](\zax,\abc,{c)})
        \end{scope}
    \end{tikzpicture}
}
\begin{document}

\title{Rees coextensions \\
of finite, negative tomonoids
\thanks{This is a pre-copyedited, author-produced version of an article accepted for publication in the {\it Journal of Logic and Computation} following peer review. The version of record,
M.~Petr\' ik, Th.~Vetterlein, Rees coextensions of finite, negative tomonoids,
{\sl J.~Log.~Comput.}~{\bf 27} (2017), 337-356,
is available online at {\tt https://doi.org/10.1093/logcom/exv047}.}}

\author{Milan Petr\'{\i}k}

\affil{\footnotesize
    Department of Mathematics, 
    Faculty of Engineering,\\ 
    Czech University of Life Sciences, 
    Prague, Czech Republic;\\
    Institute of Computer Science, \\
    Academy of Sciences of the Czech Republic, Prague, Czech Republic\\
{\tt petrik@cs.cas.cz}}

\author{Thomas Vetterlein}

\affil{\footnotesize
    Department of Knowledge-Based Mathematical Systems,\\
    Johannes Kepler University, Linz, Austria\\
{\tt Thomas.Vetterlein@jku.at}}

\maketitle

\thispagestyle{plain}

%%%%%%%%%%%%%%%%%%%%%%%%%%%%%%%%%%%%%%%%%%%%%%%%%%%%%%%%%%%%%%%%%%%%%%%%%%%%%%%%%%%

\begin{abstract}

\vspace{-3ex}\mbox{}\parindent0pt\parskip1ex

A totally ordered monoid, or tomonoid for short, is a monoid endowed with a compatible total order. We deal in this paper with tomonoids that are finite and negative, where negativity means that the monoidal identity is the top element. Examples can be found, for instance, in the context of finite-valued fuzzy logic.

By a Rees coextension of a negative tomonoid $S$, we mean a negative tomonoid $T$ such that a Rees quotient of $T$ is isomorphic to $S$. We characterise the set of all those Rees coextensions of a finite, negative tomonoid that are by one element larger. We thereby define a method of generating all such tomonoids in a stepwise fashion. Our description relies on the level-set representation of tomonoids, which allows us to identify the structures in question with partitions of a certain type.

% A modified procedure deals moreover with the commutative case.

\end{abstract}

\section{Introduction}

A totally ordered monoid, or tomonoid as we say shortly \cite{EKMMW}, is a monoid $(\So \sep \mo, 1)$ endowed with a total order $\leq$ that is compatible with the monoidal operation. The compatibility means that, for any $a, b, c \in \So$, $a \leq b$ implies $a \mo c \leq b \mo c$ and $c \mo a \leq c \mo b$. Tomonoids occur in a number of different contexts. For instance, the term orders used in computational mathematics in connection with Gr\"{o}bner bases can be identified with positive orders on $\Naturals^n$ that are compatible with the addition. Hence term orders correspond to total orders making $\Naturals^n$ into a tomonoid \cite{CLS}. Numerous further examples can be found in the context of many-valued logics. In particular, fuzzy logics are based on an extended set of truth values and this set is usually totally ordered \cite{Haj}. The conjunction in fuzzy logic is moreover commonly interpreted by an associative operation with which the total order is compatible. Hence we are naturally led to tomonoids.

The probably most familiar type of a tomonoid in fuzzy logic is the real unit interval endowed with the natural order, a left-continuous triangular norm, and the constant $1$ \cite{Haj,KMP}. Note that this tomonoid is negative and commutative. Negativity means that the monoidal identity is the top element. This condition will be assumed throughout the present paper as well. In contrast, our results are developed without the assumption of commutativity. We will see, however, that the adaptation to the commutative case does not cause difficulties.

A remarkable effort has been spent in the last decade on the problem of describing tomonoids in a systematic way, often under the assumption of negativity and mostly under the assumption of commutativity; we may, for example, refer to \cite{EKMMW,Hor1,Hor2}. In particular, the aforementioned triangular norms have been an intensive research field; see, e.g., \cite{NEG,Vet1}. Although the examination of negative, commutative tomonoids has made from an algebraic perspective a considerable progress, a comprehensive classification of these structures has not yet been found.

Given the complexity of the problem, it seems to be reasonable to consider separately the finite case. This is what we do in this paper. Immediate simplifications cannot be expected from this further restriction. Nothing seems to indicate that tomonoids are easier to describe under the finiteness assumption. Quite a few papers are devoted to finite tomonoids; see, for instance, \cite{Hor3,Vet2}.

The starting point of the present paper is the following simple observation. Let $(\So \sep \leq,$ $\mo, 1)$ be a finite negative tomonoid. Let $0$ be the bottom element of $\So$ and let $\atome$ be the atom of $\So$, that is, the smallest element apart from $\zeroe$. Then the identification of $\atome$ with the bottom element of $\So$ is a tomonoid congruence. With regard to the semigroup reduct, this is the Rees congruence by the ideal $\{ \zeroe, \atome \}$. The quotient is by one element smaller than $S$ and forming the same kind of quotient repeatedly, we get a sequence of tomonoids eventually leading to the tomonoid that consists of the single element $1$.

Seen from the other direction, each $n$-element negative tomonoid is the last entry in a sequence of $n$ tomonoids the first of which is the one-element tomonoid and each other leads to its predecessor by the identification of its smallest two elements. Proceeding one step forward in this sequence means replacing a finite, negative tomonoid by a tomonoid that is by one element larger and whose Rees quotient by the ideal consisting of the bottom element and the atom is the original one. In this paper, we specify all possibilities of enlarging the tomonoid in this way. Therefore, we propose a way of generating systematically all finite, negative tomonoids.

In accordance with the unordered case \cite{Gri2}, we call a tomonoid whose quotient is a tomonoid $S$ a {\it coextension} of $S$. We deal with Rees congruences, which are understood as usual but restricted to the case that the ideal is a downward closed set. Accordingly, we have chosen the notion of a {\it Rees coextension} to name our construction. Explicitly, a Rees coextension of a negative tomonoid $S$ is a negative tomonoid $T$ such that $S$ is isomorphic to a Rees quotient of $T$. We focus on the case that the cardinality of the coextension is just by $1$ larger and speak about {\it one-element} (Rees) coextensions then.

Coextensions of semigroups have been explored under various conditions, which are usually, however, in the present context quite special. For instance, coextensions of regular semigroups have been considered in \cite{MeNa}. In contrast, our present work is closely related to the theory of extensions of semigroups. A semigroup $T$ is an {\it (ideal) extension} of a semigroup $I$ by a semigroup $S$ if $I$ is an ideal of $T$ and $S$ is the Rees quotient of $T$ by $I$. Semigroup extensions were first investigated in \cite{Cli}; see also \cite[Section 4.4]{ClPr} or \cite[Chapter 3]{Pet}. It is moreover straightforward to adapt the notion to the ordered case \cite{Hul,KeTs}. What we study in this paper are actually special ideal extensions. The latter terminology just reflects a different viewpoint: the extended and extending semigroups are denoted in the opposite way. That is, we study ideal extensions of a two-element semigroup by a finite negative tomonoid.

Ideal extensions of ordered semigroups were first studied by A.\ J.\ Hulin. In \cite{Hul}, Clifford's technique of constructing extensions by means of partial homomorphisms was adapted to the ordered case. However, if the extended semigroup does not possess an identity the method does not necessarily cover all possible extensions. A more general method, which is applicable to weakly reductive semigroups, is due to Clifford as well. The presumed condition, however, although found ``relatively mild'' in \cite{ClPr}, turns out to be quite special in the present context again. The ordered case was investigated along these lines by N.\ Kehayopulu and M.\ Tsingelis in \cite{KeTs}.

Let us have a closer look at the method that we are going to discuss here. The construction requires the duplication of the bottom element; the latter is replaced a new bottom element and a new atom. Then, the multiplication needs to be revised in all those cases that lead, in the original tomonoid, to the bottom element. Trying out some simple examples, we soon observe that this problem is more difficult than it looks. Because of the mutual interdependencies, to decide which pairs of elements multiply to the (new) bottom element and which pairs multiply to the atom is not straightforward.

To bring transparency into this problem, a framework in which the structures under consideration become manageable is desirable. The crucial property with which we have to cope is associativity. This property is fundamental in mathematics and numerous approaches exist to shed light on it. Let us enumerate some ideas that are applicable in our context.

\begin{itemize}

\item We can lead back the associativity of a monoid to the probably most common situation where this property arises: the addition of natural numbers. Naturally, this approach is limited to the commutative case. In fact, any commutative monoid, provided it is finitely generated, is a quotient of $\mathbb{N}^n$, where $n$ is the number of generators. A description of tomonoids on this basis has been proposed, e.g., in\ \cite{Vet2}.

\item There is another situation in which associativity arises naturally: the composition of functions. In fact, we may represent any monoid as a monoid of mappings under composition. Namely, we may use the regular representation; see, e.g., \cite{ClPr}. In the presence of commutativity, we are led to a monoid of pairwise commuting, order-preserving mappings. The associativity is then accounted for by the fact that any two mappings commute. This idea is applied to tomonoids in \cite{Vet1}.

\item A third and once again totally different approach is inspired by the field of web geometry; see, e.g., \cite{Acz,BlBo}. Here, a tomonoid is represented by its level sets. Associativity then corresponds to the so-called Reidemeister condition. This approach has been applied to triangular norms in \cite{PeSa1,PeSa2}.

\end{itemize}

For our aims, any of these three approaches is worth being considered. Each of them has its benefits and drawbacks. The present paper is devoted to the third approach.

We may represent any two-place function by means of its level sets. The idea is simple and means in our context the following. Let $(\So \sep \leq)$ be a chain, that is, a totally ordered set. Let $\mo \colon \So \times \So \to \So$ be a binary operation on $\So$, and consider the following equivalence relation on the set $\So^2 = \So \times \So$:
\begin{eqnarray*}
(a,b) \sim (c,d) & \textrm{if} & a \mo b = c \mo d.
\end{eqnarray*}
Then $\sim$ partitions $\So^2$ into the subsets of pairs that are assigned equal values. To recover $\mo$ from $\sim$, all we need to know is which subset is associated with which value of $\So$. But if we know that $\mo$ behaves neutrally with respect to a designated element $1$ of $\So$, this is clear: each class then contains exactly one element of the form $(1,a)$ and is associated with $a$. Consequently, a tomonoid $(\So \sep \leq, \mo, 1)$ can be identified with a chain $\So$ together with a certain partition on $\So^2$ and the designated element $1$.

To determine the one-element Rees coextensions means, in this picture, to replace the partition on $\So^2$ by a suitable partition on $\Se^2$, where $\Se$ is the chain arising from $\So$ by a duplication of the bottom element. Thus our topic is to specify a procedure leading exactly to those partitions of the enlarged set $\Se^2$ that correspond to the one-element Rees coextensions.

We proceed as follows. In Section \ref{sec:Tomonoids}, we specify the structures under consideration and the particular type of quotients that we employ. In Section \ref{sec:Level-sets}, we put up our basic framework, which relies on the level-set representation of binary operations. In Section \ref{sec:Archimedean-extensions}, we start describing how the Rees coextensions of negative tomonoids can be determined. In this first step, we restrict to the case that the tomonoid is Archimedean. The general case is discussed in the subsequent Section \ref{sec:Extensions}. Section \ref{sec:Conclusion} contains some concluding remarks.

\section{Totally ordered monoids} \label{sec:Tomonoids}

We investigate in this paper the following structures.

\begin{definition} \label{def:Tomonoids}

A \emph{totally ordered monoid}, or a \emph{tomonoid} for short, is a structure $(\So \sep \leq, \linebreak \mo, 1)$ such that $(\So \sep \mo, 1)$ is a monoid, $(\So \sep \leq)$ is a chain, and $\leq$ is compatible with $\mo$, that is, for any $a, b, c \in \So$, $\,a \leq b$ implies $a \mo c \leq b \mo c$ and $c \mo a \leq c \mo b$. We call a tomonoid $(\So \sep \leq, \mo, 1)$ \emph{negative} if $1$ is the top element, and we call $\So$ \emph{commutative} if so is $\mo$.
\end{definition}

We are exclusively interested in tomonoids that are finite and negative. We abbreviate these properties by ``f.n.''. We note that, in the context of residuated lattices, the notion ``integral'' is commonly used instead of ``negative''. We further note that, in contrast to \cite{EKMMW}, we do not assume a tomonoid to be commutative and in fact we proceed without this assumption. However, the commutative case is without doubt important and will be considered as well.

The smallest tomonoid is the one that consists of the monoidal identity $1$ alone, called the \emph{trivial} tomonoid. Tomonoids with at least two elements are called \emph{non-trivial}.

Congruences of tomonoids are defined as follows; cf.\ \cite{EKMMW}. Here, a subset $C$ of a poset is called {\it convex} if $a, c \in C$ and $a \leq b \leq c$ imply $b \in C$.

\begin{definition} \label{def:Tomonoid-congruence}

Let $(\So \sep \leq, \mo, 1)$ be a tomonoid. A \emph{tomonoid congruence} on $\So$ is an equivalence relation $\congruence$ on $\So$ such that 
(i) $\congruence$ is a congruence of $\So$ as a monoid and
(ii) each $\congruence$-class is convex.
On the quotient $\congrcl{\So}$, we then denote the operation induced by $\mo$ again by $\mo$ and, for $a, b \in \So$, we let $\congrcl{a} \leq \congrcl{b}$ if $a \congruence b$ or $a < b$.

\end{definition}

If $\congruence$ is a tomonoid congruence on a tomonoid $\So$, we easily check that $(\congrcl{\So} \sep \leq,  \mo, \linebreak \congrcl{1})$ is a tomonoid again, called the \emph{quotient} of $\So$ by $\congruence$. It is clear that the formation of a quotient preserves the properties of finiteness, negativity, and commutativity, respectively.

In \cite{Vet1}, congruences of negative, commutative tomonoids are discussed that are induced by filters. Provided that a tomonoid is residuated, these congruences are precisely those that also preserve the residual implication. Here, we consider something different. A particularly simple type of congruences is the following; see, e.g., \cite{EKMMW}.

\begin{lemma} \label{lem:Rees-quotient}

Let $(\So \sep \leq, \mo, 1)$ be a negative tomonoid and let $q \in \So$. For $a, b \in \So$, let $a \rees_q b$ if $a  =  b$ or $a, b \leq q$. Then $\rees_q$ is a tomonoid congruence.

\end{lemma}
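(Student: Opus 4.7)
The plan is to verify the three requirements in turn: equivalence, compatibility with $\mo$, and convexity of each class. Only compatibility makes essential use of negativity; the other two are almost immediate from the definition.

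First I would note that $\approx_q$ is an equivalence relation. Reflexivity holds because $a = a$. Symmetry is built into the definition (both clauses are symmetric in $a,b$). For transitivity, if $a \approx_q b$ and $b \approx_q c$, then in each of the four combinations of the two defining clauses one concludes either $a = c$ or else $a,c \leq q$. So there are really only two equivalence classes worth mentioning: the singletons $\{a\}$ for $a > q$, and the single downward-closed class $\{b \in \So : b \leq q\}$.

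From this description, convexity is trivial. A singleton is trivially convex, and the set $\{b : b \leq q\}$ is a downward-closed subset of a chain, hence convex.

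The crucial step is compatibility with $\mo$. Assume $a \approx_q b$ and let $c \in \So$. If $a = b$ there is nothing to show. Otherwise $a, b \leq q$, and I would invoke negativity: since $1$ is the top element, $c \leq 1$, so by compatibility $a \mo c \leq q \mo c \leq q \mo 1 = q$, and similarly $b \mo c \leq q$. Hence $a \mo c \approx_q b \mo c$. The argument for left multiplication is symmetric. This is the only place where negativity is used, and it is the main (though still minor) point of the lemma: without the assumption that $1$ is the top element, a downward-closed ideal need not be absorbing.

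Combining these three observations, $\approx_q$ satisfies both conditions of Definition 2.2, so it is a tomonoid congruence.
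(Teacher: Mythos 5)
Your proof is correct. The paper gives no proof of this lemma at all (it cites the literature and merely remarks afterwards that negativity makes the down-set $\{a \in \So : a \leq q\}$ a semigroup ideal); your verification is the standard one, and your compatibility step --- $a \mo c \leq q \mo c \leq q \mo 1 = q$ using $c \leq 1$ --- isolates exactly the role of negativity that the paper's remark alludes to.
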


Note that, because of the negativity of $\So$, the set $\{ a \in \So \colon a \leq q \}$ in Lemma \ref{lem:Rees-quotient} is a semigroup ideal of $\So$. This is why the indicated congruence is actually a Rees congruence of $\So$, seen as a semigroup; see, e.g., \cite{How}.

For a finite chain $\So$, let $0$ denote the bottom element. We write $\withoutzero{\So} = \So \setminus \{0\}$. Furthermore, we call the second smallest element of $\So$, if it exists, the \emph{atom} of $\So$. The symbol $\atom$ will be used in the sequel to denote it.

\begin{definition} \label{def:Rees-quotient}

Let $(\So \sep \leq, \mo, 1)$ be a {\fntomonoid} and let $q \in \So$. Then we call $\rees_q$, as defined in Lemma \ref{lem:Rees-quotient}, the {\it Rees congruence} by $q$. We denote the quotient by $\So/q$ and call it the {\it Rees quotient} of $\So$ by $q$.

Moreover, we call $\So$ a {\it Rees coextension} of $\So/q$. We call $S$ a {\it one-element} Rees coextension, or simply a {\it one-element coextension}, if $\So$ is non-trivial and $q$ is the atom of $\So$.

\end{definition}

The problem that we rise in this paper is: How can we determine all one-element coextensions of a {\fntomonoid}? Having defined a suitable such method, we will obviously be in the position to determine, starting from the trivial tomonoid, successively all {\fntomonoid}s.

\section{Tomonoid partitions} \label{sec:Level-sets}

Let $\binaryoperation$ be a binary operation on a set $A$. Then $\binaryoperation$ gives rise to a partition of $A \times A$: the blocks of the partition are the subsets of all those pairs that are mapped by $\binaryoperation$ to the same value. This partition, together with the assignment that associates with each block the respective element of $A$, specifies $\binaryoperation$ uniquely.

The representation of tomonoids that we will employ in the sequel is based on this simple idea. From a geometric point of view, we will deal with a representation of tomonoids that comes along with two dimensions---in contrast to the commonly used graph of binary operations. For the case of triangular norms, the idea has been proposed in \cite{PeSa1} and in this framework an open problem on the convex combinations of t-norms was solved \cite{PeSa2}. An adaptation to the present context causes no difficulties.

We deal in the sequel with partitionings of posets. Let us fix some terminology. Let $(M \sep \leq)$ be a poset and let $\equ$ be an equivalence relation on $M$. Then $\leq$ induces the preorder $\preordersim$ on the set $\equcl{M}$ of $\equ$-classes, where, for $a, b \in M$,
\begin{align*}
\equcl{a} \preordersim \equcl{b} \quad & \text{if there are $c_0, \ldots, c_k \in M$ such that} \\
& a \equ c_0 \leq c_1 \equ c_2 \leq \ldots \leq c_k \equ b.
\end{align*}
We say that $\equ$ is {\it regular} for $\leq$ if the following condition, sometimes called the closed chain condition, is fulfilled: For any $c_0, \ldots, c_k \in M$ such that $c_0 \equ c_1 \leq c_2 \equ c_3 \leq \ldots \leq c_k \equ c_0$, we have $c_0 \equ \ldots \equ c_k$. In this case, $\preordersim$ is obviously antisymmetric and hence a partial order.

In other words, if an equivalence relation $\equ$ on a poset $(M \sep \leq)$ is regular for $\leq$, then $(\equcl{M} \sep \preordersim)$ is a poset again and the natural surjection $a \mapsto \equcl{a}$ is order-preserving. Our terminology originates from \cite[Def.\ 1.7]{Cod}. The paper \cite{Cod} in fact contains a detailed discussion of partitions of posets and their relationship to the partial order.

We now turn to our actual objects of interest.

\begin{definition}

Let $(\So \sep \leq, \mo, 1)$ be a tomonoid. For two pairs $(a,b), (c,d) \in \So^2$ we define
\[ (a,b) \eqo (c,d) \quad\textrm{if}\quad a \mo b = c \mo d, \]
and we call $\eqo$ the {\it level equivalence} of $\So$.

\end{definition}

The level equivalence of a tomonoid $\So$ defines a certain partition of $\So^2$. We define a corresponding relational structure.

\begin{definition} \label{def:tomonoid-partition}

Let $(\So \sep \leq)$ be a chain and let $1 \in \So$. By $\leqcomp$, we denote the componentwise order on $S^2$, that is, we put
\[ (a,b) \leqcomp (c,d) \quad\text{if $a \leq c$ and $b \leq d$} \]
for $a, b, c, d \in \So$. Moreover, let $\eqo$ be an equivalence relation on $\So^2$ such that the following conditions hold:
\begin{itemize}
\axiom{P1} $\eqo$ is regular for $\leqcomp$.
\axiom{P2} For any $(a, b) \in \So^2$ there is exactly one $c \in \So$ such that
$(a,b) \eqo (1,c) \eqo (c,1)$.
\axiom{P3} For any $a, b, c, d, e \in \So$, $\; (a,b) \eqo (d,1)$ and $(b,c) \eqo (1,e)$ imply $(d,c) \eqo (a,e)$.
\end{itemize}
Then the structure $(\So^2 \sep \leqcomp, \eqo, \oneone)$ is called a \emph{tomonoid partition}.

\end{definition}

\begin{proposition} \label{prop:from-tomonoid-to-partition}
Let $(\So \sep \leq, \mo, 1)$ be a tomonoid and let $\eqo$ be the level equivalence of $\So$. Then $(\So^2 \sep \leqcomp, \eqo, \oneone)$ is a tomonoid partition.

\end{proposition}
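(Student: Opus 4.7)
The plan is to verify each of the three axioms (P1), (P2), (P3) in turn, observing that each one is the direct transcription of a basic tomonoid property (compatibility, neutrality, associativity) into the language of the level equivalence.

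For (P1), I would write out the closed chain condition explicitly. Suppose $c_0 \sim c_1 \trianglelefteqslant c_2 \sim c_3 \trianglelefteqslant \ldots \trianglelefteqslant c_k \sim c_0$ with $c_i = (a_i, b_i) \in S^2$. A $\sim$-step means $a_i \odot b_i = a_{i+1} \odot b_{i+1}$, while a $\trianglelefteqslant$-step means $a_i \leq a_{i+1}$ and $b_i \leq b_{i+1}$, which by the compatibility of $\leq$ with $\odot$ gives $a_i \odot b_i \leq a_{i+1} \odot b_{i+1}$. Hence the sequence of values $a_i \odot b_i$ is weakly monotone along the whole chain, but it ends at its starting value, so all these values coincide. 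This shows $c_0 \sim c_1 \sim \ldots \sim c_k$, as required.

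For (P2), the natural witness is $c = a \odot b$. Since $1 \odot c = c = c \odot 1 = a \odot b$, we have $(a,b) \sim (1,c) \sim (c,1)$. Uniqueness is immediate: if $(1,c) \sim (1,c')$ then $c = 1 \odot c = 1 \odot c' = c'$.

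For (P3), I simply unpack the hypotheses and invoke associativity. From $(a,b) \sim (d,1)$ we get $a \odot b = d \odot 1 = d$, and from $(b,c) \sim (1,e)$ we get $b \odot c = 1 \odot e = e$. Then
\[
d \odot c = (a \odot b) \odot c = a \odot (b \odot c) = a \odot e,
\]
so $(d,c) \sim (a,e)$.

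No step is really an obstacle; the proposition is essentially the observation that (P1)--(P3) were designed to encode compatibility, the existence/uniqueness of a neutral representative of each block, and associativity, respectively. The only place where one needs to be slightly careful is the bookkeeping in (P1), since the closed chain condition in Definition~\ref{def:tomonoid-partition} alternates between $\sim$ and $\trianglelefteqslant$ steps and one must confirm that the chain of $\odot$-values is indeed nondecreasing throughout before closing it up.
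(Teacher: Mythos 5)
Your proof is correct and follows essentially the same route as the paper's: (P1) from compatibility of $\leq$ with $\mo$, (P2) from the identity element with witness $c = a \mo b$, and (P3) from associativity. The paper states these three deductions more tersely, while you spell out the closed-chain bookkeeping for (P1); the content is the same.
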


\begin{proof}
Let $a, b, c, d \in S$. By the compatibility of $\leq$ with $\mo$, we have that $(a,b) \leqcomp (c,d)$ implies $a \mo b \leq c \mo d$. (P1) follows. Moreover, as $1$ is the monoidal identity, we have that $(a,b) \eqo (c,1)$ iff $(a,b) \eqo (1,c)$ iff $a \mo b = c$. Hence also (P2) holds. Finally, (P3) is implied by the associativity of $\mo$.
\end{proof}

In the sequel, given a tomonoid $(\So \sep \leq, \mo, 1)$ and its level equivalence $\eqo$, we refer to $(\So^2 \sep \leqcomp, \eqo, \oneone)$ as the tomonoid partition {\it associated with $\So$}.

We note that, thanks to (P2), the regularity condition (P1) can be simplified.

\begin{lemma} \label{lem:regularity}

Let $(\So \sep \leq)$ be a chain, let $1 \in \So$, and let $\eqo$ be an equivalence relation on $\So^2$ such that {\rm (P2)} holds. Then {\rm (P1)} is equivalent to each of following statements:
\begin{itemize}
\axiom{P1'} For any $a, a', b, b', c, c', d, d' \in \So$, $\; (a,b) \eqo (a',b') \leqcomp (c,d) \eqo (c',d') \leqcomp (a,b)$ implies $(a,b) \eqo (c,d)$.

\axiom{P1''} For any $a, b, c, d, e, f \in \So$, if $(1,e) \eqo (a,b) \leqcomp (c,d) \eqo (1,f)$, then $e \leq f$.
\end{itemize}

\end{lemma}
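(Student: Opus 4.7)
The plan is to establish the cycle (P1) $\Rightarrow$ (P1') $\Rightarrow$ (P1'') $\Rightarrow$ (P1). Since (P1'') is formulated purely in terms of the canonical ``axis'' pairs $(1,e)$ provided by (P2), the guiding idea throughout is to exploit (P2) to replace arbitrary pairs in $\So^2$ by their unique representatives on $\{(1,e) : e \in \So\}$.

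For (P1) $\Rightarrow$ (P1'), I would simply note that the four-element configuration $(a,b) \eqo (a',b') \leqcomp (c,d) \eqo (c',d') \leqcomp (a,b)$ witnesses $\equcl{(a,b)} \preordersim \equcl{(c,d)}$ and $\equcl{(c,d)} \preordersim \equcl{(a,b)}$, so the antisymmetry of $\preordersim$ guaranteed by (P1) forces $(a,b) \eqo (c,d)$. For (P1') $\Rightarrow$ (P1''), I would assume the hypothesis of (P1''), namely $(1,e) \eqo (a,b) \leqcomp (c,d) \eqo (1,f)$, and argue by contradiction: if $f < e$ then $(1,f) \leqcomp (1,e)$, so the cycle $(1,e) \eqo (a,b) \leqcomp (c,d) \eqo (1,f) \leqcomp (1,e)$ has exactly the shape of the (P1') hypothesis, giving $(1,e) \eqo (c,d) \eqo (1,f)$. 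The uniqueness clause of (P2) then forces $e = f$, contradicting $f < e$.

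The main work is (P1'') $\Rightarrow$ (P1). Given a closed chain $c_0 \eqo c_1 \leqcomp c_2 \eqo c_3 \leqcomp \ldots \leqcomp c_k \eqo c_0$, where $k$ is necessarily even because the alternating pattern starts and ends with $\eqo$, I would apply (P2) to choose, for each $i$, the unique $e_i \in \So$ with $c_i \eqo (1,e_i)$. At each $\eqo$-link the uniqueness clause of (P2) yields $e_i = e_{i+1}$, while at each $\leqcomp$-link the configuration $(1,e_i) \eqo c_i \leqcomp c_{i+1} \eqo (1,e_{i+1})$ together with (P1'') yields $e_i \leq e_{i+1}$. Chaining these around the cycle gives
\[
e_0 \,=\, e_1 \,\leq\, e_2 \,=\, e_3 \,\leq\, \ldots \,\leq\, e_{k-1} \,=\, e_k \,=\, e_0,
\]
so all $e_i$ coincide, and hence all $c_i$ lie in a single $\eqo$-class, which is exactly (P1).

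I expect the last implication to be the only real obstacle: (P1) involves closed chains of arbitrary even length, while (P1'') speaks only of a single $\leqcomp$-step between axis representatives. What makes the passage painless is precisely (P2): it parameterizes $\eqo$-classes by $\So$ and thereby reduces the closed chain condition on $\So^2$ to the trivial fact that $\leq$ is a partial order on $\So$.
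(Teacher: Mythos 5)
Your proposal is correct and follows essentially the same route as the paper: the cycle (P1) $\Rightarrow$ (P1') $\Rightarrow$ (P1'') $\Rightarrow$ (P1), with the contradiction argument via $f<e$ for the middle step and the passage to the canonical representatives $(1,e_i)$ supplied by (P2) for the last step. The only cosmetic difference is that you justify (P1) $\Rightarrow$ (P1') via antisymmetry of the induced preorder rather than observing it is a direct instance of the closed chain condition, which the paper dismisses as trivial.
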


\begin{proof}
(P1) trivially implies (P1').

Assume (P1') and let $(1,e) \eqo (a,b) \leqcomp (c,d) \eqo (1,f)$. Then $f < e$ implies $(1,f) \leqcomp (1,e)$ and hence, by (P1') and (P2), $e = f$, a contradiction. (P1'') follows.

Assume (P1'') and let $a, b, c_1, \ldots, c_k, d_1, \ldots, d_k$ be such that
\[ (c_1,d_1) \eqo (c_2,d_2) \leqcomp (c_3,d_3) \eqo (c_4,d_4) \leqcomp \ldots \leqcomp (c_k,d_k) \eqo (c_1,d_1). \]
By (P2), there are $e_1, \ldots, e_k$ such that $(c_i,d_i) \eqo (1,e_i)$ for each $i$, and we conclude by (P1'') that $e_1 = e_2 \leq e_3 = \ldots \leq e_k = e_1$. It follows that the $(c_i,d_i)$ are pairwise $\eqo$-equivalent, and (P1) is shown.
\end{proof}

Before establishing the converse direction of Proposition \ref{prop:from-tomonoid-to-partition}---in fact, tomonoid partitions can be identified with tomonoids---let us interpret the properties (P1)--(P3) in Definition \ref{def:tomonoid-partition} from a geometric point of view.

For a tomonoid $\So$, let us view $\So^2$ as a square array; cf.\ Figure \ref{fig:tomonoid-partition}. Let the columns and rows be indexed by the elements of $\So$ such that the order goes to the right and upwards, respectively. For two elements $(a,b), (c,d) \in \So^2$, we then have $(a,b) \leqcomp (c,d)$ if $(c,d)$ is on the right above $(a,b)$. Moreover, $\So$ contains a designated element $1$. In case of the tomonoids on which we focus here, $1$ is the top element. In this case the element $(1,1)$ of $\So^2$ is located in the upper right corner.

\begin{figure}[ht!]

\begin{center}

\figpartition

\end{center}

\caption{A tomonoid partition associated with a $9$-element negative tomonoid $S$. Rows and columns of the array correspond to the elements of $\So$. Each square in the array thus corresponds to a pair $(a,b) \in \So^2$, where $a$ is the row index and $b$ is the column index. Moreover, the element of $\So$ indicated in the square $(a,b)$ is the product of $a$ and $b$ in $\So$. For instance, $z \mo v = u$. Finally, let $\eqo$ be the level equivalence. Then two squares belong to the same $\eqo$-class iff they contain the same symbol. For instance, the $\eqo$-class of $t$ comprises ten elements and the $\eqo$-class of $1$ is just a singleton.
\label{fig:tomonoid-partition}}

\end{figure}

The level equivalence of $\So$ induces a partition of $\So^2$. Condition (P2) implies that the blocks of this partition are in a one-to-one correspondence with the elements of the line indexed by $1$. In fact, when passing through the line $(1,c)$, $c \in \So$, from left to right, we meet each block exactly once. The same holds for the column indexed by $1$. By (P2), $(c,1)$ and $(1,c)$ are for each $c \in \So$ in the same block.

The partial order $\leqcomp$ induces a preorder $\preorderblock$ on the blocks. Condition (P1) ensures that $\preorderblock$ is a partial order as well. In fact, $\preorderblock$ is the total order inherited from $\So$ under the correspondence between the blocks and the line $(1,c)$, $c \in \So$. A way to see the meaning of (P1) is thus the following: when switching from any element of a block containing $(1,c)$ to the right or upwards, then we arrive at a block containing $(1,d)$ such that $d \geq c$.

\begin{figure}[ht!]
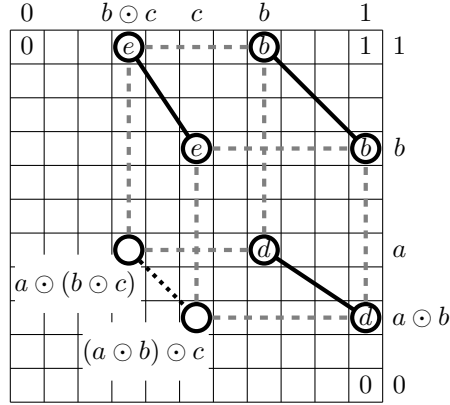


\begin{center}

\figreidemeister

\end{center}

\caption{The ``Reidemeister'' condition (P3). A (connected or broken) black line between two elements of the array indicates level equivalence; for instance, $(a,b) \eqo (a \mo b, 1)$.
By (P3), the equivalences of the pairs connected by a solid line imply the equivalence of the pair connected by a broken line.
\label{fig:P3}}

\end{figure}

Finally, assuming that the $1$ is the top element of $S$, also condition (P3) possesses an appealing interpretation in our geometric setting, a fact to which we will refer in the sequel repeatedly. Within the array representing $\So^2$, consider two rectangles such that one hits the upper edge and the other one hits the right edge; cf.\ Figure \ref{fig:P3}. Assume that the upper left, upper right, and lower right vertices of these rectangles are in the same blocks, respectively. Then, by (P3), also the remaining pair, consisting of the lower left vertices, is in the same block. The corresponding property in web geometry is the {\it Reidemeister condition} \cite{Acz,BlBo}.

In the sequel, when working with a set $\So^2$, where $\So$ is a chain with a designated element $1$, we will identify the elements of the form $(1,c)$, $c \in \So$, with $c$. It will be clear from the context if $c$ denotes an element of $\So$ or of $\So^2$. In particular, if $\eqo$ is an equivalence relation on $\So^2$, then $(a,b) \eqo c$ means $(a,b) \eqo (1,c)$. Moreover, the $\eqo$-class of a $c \in \So$ is meant to be the $\eqo$-class containing $(1,c)$.

\begin{proposition} \label{prop:from-partition-to-tomonoid}

Let $(\So^2 \sep \leqcomp, \eqo, \oneone)$ be a tomonoid partition. Let $\leq$ be the underlying total order of $\So$. Moreover, for any $a, b \in \So$, let
\begin{equation} \label{eq:def-mul-from-lvleq}
a  \mo b \; = \; \text{ the unique $c$ such that $(a,b) \eqo c$.}
\end{equation}
Then $(\So \sep \leq, \mo, 1)$ is the unique tomonoid such that $(\So^2 \sep \leqcomp, \eqo, \oneone)$ is its associated tomonoid partition.

\end{proposition}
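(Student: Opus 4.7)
The plan is to verify the monoid axioms, order-compatibility, and the matching of partitions. First, the operation $\mo$ is well-defined on $\So$ thanks to (P2), which guarantees for each $(a,b) \in \So^2$ a unique $c \in \So$ with $(a,b) \eqo (1,c) \eqo (c,1)$. Neutrality of $1$ is then immediate: applying (P2) to $(1,b)$ shows that the unique $c$ with $(1,b) \eqo (1,c)$ is $c = b$, so $1 \mo b = b$, and symmetrically $a \mo 1 = a$.

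Next I would establish associativity by direct invocation of (P3). Given $a,b,c \in \So$, set $d = a \mo b$ and $e = b \mo c$, so that $(a,b) \eqo (d,1)$ and $(b,c) \eqo (1,e)$ by the definition of $\mo$ together with (P2). Then (P3) gives $(d,c) \eqo (a,e)$. But $(d,c) \eqo (1, d \mo c) = (1,(a \mo b) \mo c)$ and $(a,e) \eqo (1, a \mo e) = (1, a \mo (b \mo c))$, so by the uniqueness clause of (P2) we conclude $(a \mo b) \mo c = a \mo (b \mo c)$. For compatibility of $\leq$ with $\mo$, I would invoke the reformulation (P1'') from Lemma \ref{lem:regularity}. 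If $a \leq b$ and $c \in \So$, then $(a,c) \leqcomp (b,c)$; setting $e = a \mo c$ and $f = b \mo c$, we have $(1,e) \eqo (a,c) \leqcomp (b,c) \eqo (1,f)$, whence (P1'') yields $e \leq f$, i.e.\ $a \mo c \leq b \mo c$. Left compatibility follows by a symmetric argument, so $(\So \sep \leq, \mo, 1)$ is indeed a tomonoid.

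It remains to show that the tomonoid partition associated with $(\So \sep \leq, \mo, 1)$ coincides with the given one, and that $\mo$ is the only possible operation. For the first point, note that for any $(a,b),(c,d) \in \So^2$, the definition of $\mo$ together with (P2) yields $(a,b) \eqo (c,d)$ iff $(a,b)$ and $(c,d)$ share the same representative of the form $(1,z)$ in their $\eqo$-class, iff $a \mo b = c \mo d$; so the level equivalence of $(\So \sep \leq, \mo, 1)$ is exactly $\eqo$. Uniqueness is then immediate: any tomonoid whose level equivalence is $\eqo$ must, by (P2), assign to $(a,b)$ the unique $c$ with $(a,b) \eqo (1,c)$, which is our definition of $a \mo b$. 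No step looks particularly delicate; the main point requiring care is the correct bookkeeping of the indices in (P3), which is essentially the ``Reidemeister'' content of associativity already emphasised after Lemma \ref{lem:regularity}.
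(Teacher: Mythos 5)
Your proposal is correct and follows essentially the same route as the paper: define $\mo$ via (P2), get the identity laws from (P2), associativity from (P3), compatibility from (P1'') of Lemma \ref{lem:regularity}, and then identify $\eqo$ with the level equivalence to obtain uniqueness. You merely spell out the (P3)/associativity bookkeeping that the paper leaves implicit; nothing is missing or different in substance.
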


\begin{proof}

By assumption, $\So$ is totally ordered and $\leqcomp$ is the induced componentwise order on $\So^2$. Evidently, $\leqcomp$ determines the total order $\leq$ on $\So$ uniquely. It is furthermore clear from (P2) that $\mo$ can be defined by (\ref{eq:def-mul-from-lvleq}).

For $a \in \So$, we have $1 \mo a = a$ by construction and $a \mo 1 = 1 \mo a$ by (P2). Furthermore, (P2) and (P3) imply the associativity of $\mo$. Thus $(\So \sep \mo, 1)$ is a monoid. Recall next that (P1) and (P2) imply (P1'') by Lemma \ref{lem:regularity}. Let $a \leq b$. Then $(a,c) \leqcomp (b,c)$, and we conclude from (P1'') that $a \mo c \leq b \mo c$. Similarly, we see that $c \mo a \leq c \mo b$. Thus $\leq$ is compatible with $\mo$ and $(\So \sep \leq, \mo, 1)$ is a tomonoid. It is clear that $\eqo$ is the level equivalence of $\So$ and we conclude that $(\So^2 \sep \leqcomp, \eqo, \oneone)$ is its associated tomonoid partition.

Let $(\So^2 \sep \leqcomp, \eqo, \oneone)$ be associated to another tomonoid $(\So' \sep \leq', \mo', 1')$. Then, by the way in which a tomonoid partition is constructed from a tomonoid, $\So' = \So$, $\;{\leq'} = {\leq}$, and $1' = 1$. Furthermore, if for some $a, b, c \in \So$ we have $a \mo' b = c$, then $(a,b) \eqo (1,c)$ and hence $a \mo b = c$. We conclude $\mo' = \mo$.
\end{proof}

By Propositions \ref{prop:from-tomonoid-to-partition} and \ref{prop:from-partition-to-tomonoid}, tomonoids and tomonoid partitions are in a one-to-one correspondence. We will present our results in the sequel mostly with reference to the latter, that is, with reference to tomonoid partitions.

Under this identification, we will apply properties, constructions, etc.\ defined for tomonoids to tomonoid partitions as well. For instance, a negative tomonoid partition is meant to be a tomonoid partition such that the corresponding tomonoid is negative.

Properties of tomonoids that are repeatedly addressed in this paper are finiteness, negativity, and commutativity. Finiteness has for tomonoids and their associated tomonoid partitions obviously the same meaning. Negativity and commutativity may be characterised as follows.

\begin{lemma} \label{lem:properties-of-tomonoid-partition}

Let $(\So^2 \sep \leqcomp, \eqo, \oneone)$ be a tomonoid partition.
\begin{itemize}

\item[\rm (i)] $\So^2$ is negative if and only if $(1,1)$ is the top element of $\So^2$ if and only if the $\eqo$-class of any $c \in \So$ is contained in $\{ (a,b) \in \So^2 \colon a, b \geq c \}$.

\item[\rm (ii)] $\So^2$ is commutative if and only if $(a,b) \eqo (b,a)$ for any $a, b \in \So$.

\end{itemize}

\end{lemma}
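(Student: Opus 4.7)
The plan is to translate each statement back to the tomonoid side using Proposition \ref{prop:from-partition-to-tomonoid}, which recovers the product via $a \mo b = c \iff (a,b) \eqo c$ (under the identification $(1,c) \leftrightarrow c$). Once this dictionary is in place, both parts reduce to short arguments using (P2) and (P1'').

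For part (i), the first equivalence is purely order-theoretic: since $\leqcomp$ is the componentwise order on $\So^2$, the pair $(1,1)$ is the top of $(\So^2 \sep \leqcomp)$ exactly when $1$ is the top of $(\So \sep \leq)$, which is by definition the negativity of the associated tomonoid. For the second equivalence, I would argue as follows. Assume $\So$ is negative and let $(a,b) \eqo c$, i.e.\ $a \mo b = c$. Since $b \leq 1$, we have $(a,b) \leqcomp (a,1)$, and by (P2), $(a,1) \eqo a$. Thus
\[
(1,c) \eqo (a,b) \leqcomp (a,1) \eqo (1,a),
\]
and Lemma \ref{lem:regularity}(P1'') gives $c \leq a$; the same reasoning with the roles of the coordinates swapped yields $c \leq b$. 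Hence the $\eqo$-class of $c$ is contained in $\{(a,b) \in \So^2 \colon a, b \geq c\}$. Conversely, if that containment holds for every $c \in \So$, then for any $d \in \So$ the pair $(1,d)$ belongs to the class of $d$ (by (P2)), so the containment forces $1 \geq d$; thus $1$ is the top of $\So$.

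Part (ii) is immediate from the dictionary: commutativity of $\mo$ means $a \mo b = b \mo a$ for all $a,b \in \So$, and by the uniqueness part of (P2) this is the same as saying that $(a,b)$ and $(b,a)$ belong to the same $\eqo$-class, i.e.\ $(a,b) \eqo (b,a)$.

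There is essentially no hard step here; the only point requiring a small argument is the use of (P1'') to derive $c \leq a$ and $c \leq b$ from componentwise inequalities, but this is exactly the content of Lemma \ref{lem:regularity}. Everything else is a direct unfolding of definitions.
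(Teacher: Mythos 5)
Your proof is correct; the paper actually states this lemma without proof, and your argument is exactly the intended unfolding of the definitions. In particular, your key step---deriving $c \leq a$ and $c \leq b$ from $(1,c) \eqo (a,b) \leqcomp (a,1) \eqo (1,a)$ via (P2) and (P1'')---is precisely the argument the paper itself gives for the claim ($\star$) in the proof of Proposition \ref{prop:PartAx-ass-simplified}, so you are on the same track as the authors.
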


The following proposition is devoted to the structures in which we are actually interested: the {\fntomonoid} partitions. The slightly optimised characterisation will be useful in subsequent proofs.

\begin{proposition} \label{prop:PartAx-ass-simplified}
Let $(\So \sep \leq)$ be a finite and at least two-element chain with the top element $1$. Let $\zeroo$ be the bottom element of $\So$. Then $(\So^2 \sep \leqcomp, \eqo, \oneone)$ is a tomonoid partition if and only if {\rm (P1'')}, {\rm (P2)}, and the following condition hold:
\begin{itemize}
\axiom{P3'} For any $a, b, c, d, e \in \So \setminus \{ \zeroo, 1 \}$, $\; (a,b) \eqo d$ and $(b,c) \eqo e$ imply $(d,c) \eqo (a,e)$.
\end{itemize}
In this case, $(\So^2 \sep \leqcomp, \eqo, \oneone)$ is finite and negative.
\end{proposition}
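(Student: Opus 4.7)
My plan is to prove both directions. The forward implication is immediate: Lemma \ref{lem:regularity} shows that (P1) implies (P1'') in the presence of (P2), and (P3') is simply (P3) with its quantifiers restricted to $\So \setminus \{\zeroo, 1\}$. For the converse, (P1) follows back from (P1'') and (P2) via the same Lemma \ref{lem:regularity}, so the task reduces to deriving the full Reidemeister condition (P3) from (P3') together with (P1'') and (P2), and to establishing negativity of the resulting tomonoid partition.

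Before attacking (P3), I would extract two preliminary facts from (P1'') and (P2) alone. Write $a \mo b$ for the unique $c \in \So$ with $(a,b) \eqo (1,c) \eqo (c,1)$. Monotonicity of $\mo$ in each argument is immediate: $(a,b) \leqcomp (a,b')$ combined with (P1'') yields $a \mo b \leq a \mo b'$. Moreover, since $1$ is the top element of $\So$, we have $(1, a \mo b) \eqo (a,b) \leqcomp (1,b) \eqo (1,b)$, and (P1'') then gives $a \mo b \leq b$; symmetrically $a \mo b \leq a$. Thus the partition is negative in the sense of Lemma \ref{lem:properties-of-tomonoid-partition}(i), which also yields the final sentence of the statement. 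In particular, $\zeroo \mo x = x \mo \zeroo = \zeroo$, while $1 \mo x = x \mo 1 = x$ follows directly from (P2).

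To verify (P3), suppose $(a,b) \eqo d$ and $(b,c) \eqo e$, with the goal $(d,c) \eqo (a,e)$, that is, $d \mo c = a \mo e$. If $a, b, c, d, e \in \So \setminus \{\zeroo, 1\}$, this is (P3') verbatim. Otherwise I would walk through the boundary cases. The unit cases are pure unit-law reductions: $a = 1$ forces $d = b$ and both sides become $e$; $b = 1$ forces $d = a$, $e = c$, both sides $a \mo c$; $c = 1$ forces $e = b$, both sides $a \mo b$; and $d = 1$ or $e = 1$ force $a = b = 1$ or $b = c = 1$ respectively by negativity, reducing to the preceding cases. For the zero cases, $a = \zeroo$ forces $d = \zeroo$, $b = \zeroo$ forces $d = e = \zeroo$, and $c = \zeroo$ forces $e = \zeroo$, so the remaining work is in $d = \zeroo$ and $e = \zeroo$. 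If $d = \zeroo$ then $a \mo b = \zeroo$, and because $e = b \mo c \leq b$ by negativity, monotonicity gives $a \mo e \leq a \mo b = \zeroo$, so both $d \mo c$ and $a \mo e$ equal $\zeroo$; the case $e = \zeroo$ is symmetric, using $d = a \mo b \leq b$ and hence $d \mo c \leq b \mo c = \zeroo$.

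I do not expect a deep obstacle. The only genuine content of (P3) beyond (P3') is the behaviour at $\zeroo$ and $1$, and all of it collapses via the unit and absorbing laws together with monotonicity, both of which flow from (P1'') and (P2). The one thing to watch is a clean, exhaustive enumeration of the boundary configurations so that none of the zero/unit subcases is overlooked.
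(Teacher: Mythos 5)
Your proposal is correct and follows essentially the same route as the paper: derive negativity (the paper's fact $(\star)$, equivalent to your $a \mo b \leq a, b$) from (P1'') and (P2), then dispose of (P3) by a boundary case analysis on which of $a,b,c,d,e$ equals $\zeroo$ or $1$, with the only substantive cases being $d=\zeroo$ and $e=\zeroo$, handled via monotonicity exactly as the paper does via (P1). Your enumeration is in fact slightly more explicit than the paper's, which writes out only four representative cases and declares the rest similar.
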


\begin{proof}
The ``only if'' part is clear by definition and by Lemma \ref{lem:regularity}.

To see the ``if'' part, let $(\So^2 \sep \leqcomp, \eqo, \oneone)$ fulfil (P1''), (P2), and (P3'). Then (P1) holds by Lemma \ref{lem:regularity}. We next show that the negativity criterion of Lemma \ref{lem:properties-of-tomonoid-partition}(i) holds:

($\star$) $(a,b) \eqo (1,c)$ implies $c \leq a$ and $c \leq b$.

Indeed, in this case $(c,1) \eqo (1,c) \eqo (a,b) \leqcomp (a,1)$ by (P2) and the fact that $1$ is the top element. Hence, by (P1''), $c \leq a$. Similarly, we see that $c \leq b$.

It remains to prove (P3). Let $a, b, c, d, e \in \So$ be such that $(a,b) \eqo d$ and $(b,c) \eqo e$. We have to show $(d,c) \eqo (a,e)$ if one of the five elements equals $0$ or $1$. We consider certain cases only, the remaining ones are seen similarly.

Let $a = 1$. Then $(1,b) \eqo (1,d)$, hence $b = d$ by (P2), and it follows $(d,c) = (b,c) \eqo (1,e) = (a,e)$.

Let $d = 1$. Then $(a,b) \eqo (1,1)$, and by ($\star$), we conclude $a = b = 1$. From $(b,c) \eqo e$ it follows $e = c$. Hence $(d,c) = (a,e)$.

Note next that, for any $f \in \So$, $(f,0) \eqo 0$. This follows again from ($\star$).

Let $a = 0$. Then $(a,b) = (0,b) \eqo 0$ and hence $d = 0$. Hence $(d,c) = (0,c) \eqo 0 \eqo (0,e) = (a,e)$.

Let $d = 0$. Then $(d,c) = (0,c) \eqo 0$. From $(b,c) \eqo e$, it follows by ($\star$) that $e \leq b$. Hence $(a,e) \leqcomp (a,b) \eqo 0 \eqo (0,0) \leqcomp (a,e)$ and, by (P1), $(a,e) \eqo 0$. In particular, $(a,e) \eqo (d,c)$.
\end{proof}

We finally see how Rees quotients are formed in our framework.

\begin{proposition} \label{prop:rees-quotient-by-level-sets}

Let $(\So^2 \sep \leqcomp, \eqo, \oneone)$ be a negative tomonoid partition and let $q \in \So$. Let $\So_q = \{ a \in \So \colon a > q \} \disjointunion \{0\}$, where $0$ is a new element, and endow $\So_q$ with the total order extending the total order on $\{ a \in \So \colon a > q \}$ such that $0$ is the bottom element. Then, for each $c \in \withoutzero{{\So_q}}$, the $\eqo$-class of $c$ is contained in $(\withoutzero{{\So_q}})^2$. Let $\eqo_q$ be the equivalence relation on ${\So_q}^2$ whose classes are the $\eqo$-classes of each $c \in \withoutzero{{\So_q}}$ as well as the subset of ${\So_q}^2$ containing the remaining elements. Then $({\So_q}^2 \sep \leqcomp, \eqo_q, \oneone)$ is the Rees quotient of $\So^2$ by $q$.

\end{proposition}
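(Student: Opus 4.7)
The plan is to reduce this to a bookkeeping exercise: identify the chain underlying $\So/q$ with $\So_q$, and then verify that $\eqo_q$ is exactly the level equivalence of the Rees quotient tomonoid, after which Proposition \ref{prop:from-tomonoid-to-partition} (already proved) finishes the job.

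First I would dispatch the containment claim. Let $c \in \withoutzero{{\So_q}}$, so $c > q$. By Lemma \ref{lem:properties-of-tomonoid-partition}(i), negativity of the tomonoid partition $\So^2$ forces the $\eqo$-class of $c$ to lie in $\{(a,b) \in \So^2 \colon a \geq c \text{ and } b \geq c\}$. Since $c > q$, every such pair has both coordinates $> q$, hence in $\withoutzero{{\So_q}}$.

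Next I would make the chain-level identification. The $\rees_q$-classes of $\So$ are the singletons $\{a\}$ for $a > q$ together with the ideal $\{a \in \So \colon a \leq q\}$; under Definition \ref{def:Tomonoid-congruence} the induced order on $\So/q$ puts this ideal-class strictly below every other class and preserves the order among the singletons. Relabelling the ideal-class as $0$ yields exactly the chain $\So_q$. Writing $\bar\mo$ for the induced monoidal operation on $\So/q$, I would then note: for $a, b > q$, $a \bar\mo b$ equals $a \mo b$ if $a \mo b > q$ and equals $0$ otherwise; and $0 \bar\mo x = x \bar\mo 0 = 0$ for every $x$. By Lemma \ref{lem:Rees-quotient}, $(\So_q \sep \leq, \bar\mo, 1)$ is a (finite, negative) tomonoid.

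Finally I would compare the level equivalence of this tomonoid, call it $\eqo'$, with $\eqo_q$. For $c \in \withoutzero{{\So_q}}$, the $\eqo'$-class of $c$ is $\{(a,b) \in \So_q^2 \colon a \bar\mo b = c\}$; since $c > q$, no such pair can have a zero coordinate, and for $a, b \in \withoutzero{{\So_q}}$ the identity $a \bar\mo b = c$ is equivalent to $a \mo b = c$ in $\So$. By the containment proved above, this class coincides with the $\eqo$-class of $c$ in $\So^2$, i.e.\ with the $\eqo_q$-class of $c$. The remaining $\eqo'$-class is $\{(a,b) \in \So_q^2 \colon a \bar\mo b = 0\}$, which is precisely $\So_q^2$ with the classes just described removed, matching the definition of $\eqo_q$. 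Hence $\eqo_q = \eqo'$, and Proposition \ref{prop:from-tomonoid-to-partition} gives that $(\So_q^2 \sep \leqcomp, \eqo_q, \oneone)$ is the tomonoid partition associated with $\So/q$.

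No step is genuinely difficult; the one place where care is required is the boundary case between the two kinds of $\eqo_q$-classes, handled by the negativity-driven containment step, which is why I would establish that first and invoke it explicitly in the final comparison rather than re-checking axioms (P1)--(P3) from scratch.
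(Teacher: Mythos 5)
Your proposal is correct and follows essentially the same route as the paper: establish the containment of the $\eqo$-class of each $c > q$ in $(\withoutzero{\So_q})^2$ via the negativity criterion of Lemma \ref{lem:properties-of-tomonoid-partition}(i), then form the Rees quotient tomonoid, and verify class by class that its level equivalence coincides with $\eqo_q$, invoking Proposition \ref{prop:from-tomonoid-to-partition} rather than rechecking (P1)--(P3). The paper's proof is just a more compressed version of the same comparison.
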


\begin{proof}

Let $(\So \sep \leq, \mo, 1)$ be the corresponding negative tomonoid. Let $\mo_q$ be the binary operation on $\So_q$ such that $(\So_q \sep \leq, \mo_q, 1)$ is (under the obvious identifications) the Rees quotient of $\So$ by $q$. Let $({\So_q}^2 \sep \leqcomp, \eqo'_q, \oneone)$ be the associated tomonoid partition.

Let $a, b, c \in \So$ such that $c > q$ and $(a,b) \eqo c$. Then $a, b \geq c$ by Lemma \ref{lem:properties-of-tomonoid-partition}(i) and consequently $a, b > q$. We conclude that the $\eqo$-class of each $c \in \withoutzero{{\So_q}}$ is contained in $(\withoutzero{{\So_q}})^2$.

We have to show $\eqo'_q = \eqo_q$. Let $a, b, c \in S_q$ such that $c \neq 0$. Then $(a,b) \eqo'_q c$ iff $a \mo_q b = c$ iff $a \mo b = c$ iff $(a,b) \eqo c$. Hence the $\eqo'_q$-class of each $c \in \withoutzero{{\So_q}}$ coincides with the $\eqo$-class of $c$. There is only one further $\eqo'_q$-class, the $\eqo'_q$-class of $0$, which consequently consists of all elements of ${\So_q}^2$ not belonging to the $\eqo$-class of any $c \in \withoutzero{{\So_q}}$.
\end{proof}

We may geometrically interpret Proposition \ref{prop:rees-quotient-by-level-sets} as follows. The Rees quotient by an element $q$ arises from the partition on $\So^2$ by removing all columns and rows indexed by elements $\leq q$ and by adding instead a single new column from left and a single new row from below. Moreover, all elements of the new column and the new row as well as all remaining elements that originally belonged to a class of some $a \leq q$ are joined into a single class, which is the class of the new bottom element. The classes of elements strictly larger than $q$ remain unchanged.

In the special case that $q$ is the atom of a {\fntomonoid}, just the left-most two columns and the lowest two rows are merged in this way. Figure \ref{fig:Rees-chain} shows the chain obtained from a $9$-element tomonoid by applying this procedure repeatedly.

\begin{figure}[ht!]

\begin{center}

\figrees

\end{center}

\caption{Beginning with the 9-element tomonoid shown in Figure \ref{fig:tomonoid-partition}, the successive formation of Rees quotients by the atom leads finally to the trivial tomonoid.
\label{fig:Rees-chain}}

\end{figure}

\section{Rees coextensions: the Archimedean case}
\label{sec:Archimedean-extensions}

We now turn to the problem of determining all one-element coextensions of a finite, negative tomonoid. In this section, we will restrict to those tomonoids that fulfil the Archimedean property.

A negative tomonoid $\So$ is called {\it Archimedean} if, for any $a \leq b < 1$, there is an $n \geq 1$ such that $b^n \leq a$. Here, $b^n = b \mo \ldots \mo b$ ($n$ factors). Note that, in the finite case, Archimedeanicity is obviously equivalent to nilpotency. Indeed, a {\fntomonoid}, whose bottom element is $0$, is Archimedean if and only if there is an $n \geq 1$ such that $a^n = 0$ for all $a < 1$. Note furthermore that negative tomonoids with at most two elements are trivially Archimedean.

We begin by characterising the Archimedean f.n.\ tomonoid partitions.

\begin{lemma} \label{lem:Archimedean-tomonoid}

Let $(\So^2 \sep \leqcomp, \eqo, \oneone)$ be a f.n.\ tomonoid partition. The following statements are pairwise equivalent:
\begin{itemize}

\item[\rm (i)] $\So^2$ is Archimedean.

\item[\rm (ii)] $(b,a) \noteqo (1,a)$ for any $a \in \withoutzero{\So}$ and $b < 1$.

\item[\rm (iii)] $(a,b) \noteqo (a,1)$ for any $a \in \withoutzero{\So}$ and $b < 1$.

\end{itemize}

\end{lemma}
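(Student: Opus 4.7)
The plan is to first rewrite conditions (ii) and (iii) in monoid language via (P2): $(b,a) \eqo (1,a)$ is equivalent to $b \mo a = a$, and $(a,b) \eqo (a,1)$ to $a \mo b = a$. So (ii) says that no $b < 1$ acts as a left identity on any nonzero $a$, and (iii) is its right-handed dual. I will also use the standing fact, immediate from negativity and compatibility (so that $0 \mo c \leq 0 \mo 1 = 0$ and dually), that $0 \mo c = c \mo 0 = 0$ for every $c \in \So$.

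For (i) $\Rightarrow$ (ii), I would fix $b < 1$ and $a \in \withoutzero{\So}$ and derive a contradiction from $b \mo a = a$. Iterating yields $b^n \mo a = a$ for every $n \geq 1$; Archimedeanicity, which in the finite case coincides with nilpotency (as recalled just before the lemma), provides an $n$ with $b^n = 0$, whence $a = 0 \mo a = 0$, contradicting $a \in \withoutzero{\So}$. The implication (i) $\Rightarrow$ (iii) is the mirror argument using $a \mo b^n = a$.

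For (ii) $\Rightarrow$ (i), I would argue the contrapositive. If $\So$ is not Archimedean then, again by the finite-case reformulation recalled before the lemma, some $b < 1$ satisfies $b^n \neq 0$ for all $n \geq 1$. Compatibility with $b \leq 1$ gives $b^{k+1} = b \mo b^k \leq 1 \mo b^k = b^k$, so the sequence $(b^k)_{k \geq 1}$ is weakly decreasing in the finite chain $\So$ and therefore stabilises: there is $n$ with $b^n = b^{n+1}$. Setting $a = b^n$, we have $a \neq 0$ by hypothesis and $b \mo a = a$, contradicting (ii). The implication (iii) $\Rightarrow$ (i) follows by the symmetric argument using $b^{k+1} = b^k \mo b \leq b^k$.

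I do not expect any serious obstacle: the proof is essentially bookkeeping combined with the standard ``decreasing sequence in a finite chain must stabilise'' trick. The only point that needs a little care is keeping left- and right-multiplication strictly separated, since commutativity is not assumed; but this is purely cosmetic, as the two sides are handled by verbatim mirror arguments.
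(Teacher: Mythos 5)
Your proof is correct and follows essentially the same route as the paper's: translate (ii) and (iii) into $b \mo a = a$ resp.\ $a \mo b = a$ via (P2), then use negativity, finiteness (a weakly decreasing sequence in a finite chain stabilises), and the equivalence of Archimedeanicity with nilpotency. The only difference is organisational — you take the contrapositive in the direction where the paper argues directly and vice versa — which does not change the substance.
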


\begin{proof}

Let $(\So \sep \leq, \mo, 1)$ be the corresponding {\fntomonoid} and let $\zeroo$ be the bottom element of $\So$. W.l.o.g., we can assume $0 \neq 1$. We show that (i) and (ii) are equivalent. The equivalence of (i) and (iii) is seen similarly.

Assume that (ii) holds. By the negativity of $\So$, we have $b \mo a < a$ for all $a \neq \zeroo$ and $b < 1$. Let $a < 1$. Then, for any $n \geq 1$, either $a^{n+1} < a^n$ or $a^n = 0$. As $\So$ is finite, the latter possibility applies for a sufficiently large $n$. It follows that $\So$ is Archimedean.

Assume that (ii) does not hold. Let $a \neq \zeroo$ and $b < 1$ such that $b \mo a = a$. As $\So$ is negative, we then have $a \leq b$ and it follows $b^n \geq b^{n-1} \mo a = a > 0$ for any $n \geq 2$. Hence $\So$ cannot be Archimedean.
\end{proof}

We shall construct coextensions of Archimedean {\fntomonoid}s that are Archimedean again. Let us outline our procedure, adopting an intuitive point of view.

To begin with, we again identify the tomonoid partition with a partitioned square array; cf.\ Figure \ref{fig:tomonoid-partition}. We enlarge this square, doubling the lowest row and left-most column. The equivalence relation $\eqe$ making the enlarged square into a tomonoid partition will then be constructed in two steps. First, we determine what we call the ramification, which is based on an equivalence relation $\eqp$ contained in the level equivalence of any Archimedean one-element coextension. Second, we apply a simple procedure to choose the final equivalence relation $\eqe$. To this end, certain $\eqp$-classes have to be merged such that the part of the square containing the classes of the new tomonoid's bottom element and atom is divided up into exactly two $\eqe$-classes.

For a chain $(\So \sep \leq)$, we denote by $(\Se \sep \leq)$ its {\it zero doubling extension}: we put $\Se = \withoutzero{\So} \disjointunion \{ \zeroe, \alpha \}$, where $\zeroe, \alpha$ are new elements, and we endow $\Se$ with the total order extending the total order on $\withoutzero{\So}$ such that $\zeroe < \alpha < a$ for all $a \in \withoutzero{\So}$. Furthermore, let $(\So \sep \leq, \mo, 1)$ be a \fntomonoid. Then we assume any one-element coextension of $\So$ to be of the form $(\Se \sep \leq, \me, 1)$. In particular, the intersection of $\So$ and $\Se$ is exactly $\withoutzero{\So}$ and $a \me b = a \mo b = c$ whenever $a, b, c \in \withoutzero{\So}$.

\begin{definition} \label{def:intermediate-equivalence-relation-archimedean}

Let $(\So^2 \sep \leqcomp, \eqo, \oneone)$ be an Archimedean {\fntomonoid} partition. Let $\Se = \withoutzero{\So} \disjointunion \{ \zeroe, \alpha \}$ be the zero doubling extension of $\So$. We define
\begin{equation} \label{fml:support}
\begin{split}
& \support \;=\; \{ (a,b) \in \Se^2 \colon 
           \text{$a, b \in \withoutzero{\So}$
                and there is a $c \in \withoutzero{\So}$ such that $(a,b) \eqo c$} \}, \\
& \cosupport \;=\; \Se^2 \setminus \support.
\end{split}
\end{equation}
Let $\eqp$ be the smallest equivalence relation on $\Se^2$ such that the following conditions hold:

\begin{itemize}

\axiom{E1} For any $(a, b), (c, d) \in \support$ such that $(a, b) \eqo (c, d)$, we have $(a, b) \eqp (c, d)$.

\axiom{E2} For any $(a,b), (b,c) \in \support$ and $d, e \in \withoutzero{\So}$ such that $(d,c), (a,e) \in \cosupport$, $\;(a,b) \eqo d$, and $(b,c) \eqo e$, we have $(d,c) \eqp (a,e)$.

\axiom{E3} For any $a, b, c, e \in \withoutzero{\So}$ such that $(a,b) \in \cosupport$, $\;(b,c) \eqo e$, and $c < 1$, we have $(a,e) \eqp \zeroe$.

Moreover, for any $a, b, c, d \in \withoutzero{\So}$ such that $(b,c) \in \cosupport$, $\;(a,b) \eqo d$, and $a < 1$, we have $(d,c) \eqp \zeroe$.

\axiom{E4} We have $(\zeroe, 1) \eqp (1, \zeroe) \eqp (\atome, b) \eqp (b, \atome)$ for any $b < 1$, and $(\atome, 1) \eqp (1, \atome)$. Moreover, for any $(a,b), (c,d) \in \cosupport$ such that $(a,b) \leqcomp (c,d) \eqp \zeroe$, we have $(a,b) \eqp \zeroe$.

\end{itemize}

Then we call the structure $(\Se^2 \sep \leqcomp, \eqp, \oneone)$ the {\it $(1,1)$-ramification} of $(\So^2 \sep \leqcomp, \eqo, \oneone)$.

\end{definition}

In this section, we will refer to the $(1,1)$-ramification also simply as the ``ramification''. The reason of the reference to the pair $(1,1)$ will become clear only in the next section.

Let us review Definition \ref{def:intermediate-equivalence-relation-archimedean} in order to elucidate how the ramification $(\Se^2 \sep \leqcomp, \eqp,$ $\oneone)$ arises from a tomonoid partition $(\So^2 \sep \leqcomp, \eqo, \oneone)$. We note first that $\support$ consists of all pairs $(a, b) \in \So^2$ whose product in $\So$ is not the bottom element. Indeed, by the negativity of $\So$, $(a, b) \eqo c$ and $c \in \withoutzero{\So}$ implies $a, b \in \withoutzero{\So}$. In other words, $\support$ is the union of the $\eqo$-classes of all $c \in \withoutzero{\So}$ and this union lies in ${\withoutzero{\So}}^2$. We furthermore note that $\support$ is an upwards closed subset of $\Se^2$. This is a consequence of the regularity of $\eqo$; cf.\ condition (P1'') in Lemma \ref{lem:regularity}. Consequently, $\cosupport$ is a downward closed subset of $\Se^2$.

Inspecting the conditions (E1)--(E4) defining $\eqp$, we next observe that $\eqp$-equivalences of elements of $\support$ are exclusively required by condition (E1). From this fact we conclude that the $\eqo$-class of any $c \in \withoutzero{\So}$ is also a $\eqp$-class. Hence the $\eqo$-classes contained in $\support$ are $\eqp$-classes as well. Note that this also means that the sets $\support$ and $\cosupport$ are uniquely determined by the ramification. In fact, $\support$ contains the $\eqp$-classes of all $c \in \withoutzero{\So}$ and $\cosupport$ contains all remaining $\eqp$-classes.

The $\eqp$-classes contained in $\cosupport$ are in turn defined by conditions (E2)--(E4).  See Figure \ref{fig:E2-E3} for an illustration of conditions (E2) and (E3). Note that each prescription contained in (E2) and (E3) is of the form that certain $\eqo$-equivalences imply that a certain pair of elements of $\cosupport$ is $\eqp$-equivalent. Finally, (E4) prescribes that the $\eqp$-class of $\zeroe$ is downward closed. We remark that $\cosupport$ contains the $\eqp$-classes of the bottom element $\zeroe$ and the atom $\alpha$, but possibly further $\eqp$-classes, which do not contain $(1,c)$ or $(c,1)$ for any $c \in \Se$.

\begin{figure}[ht!]
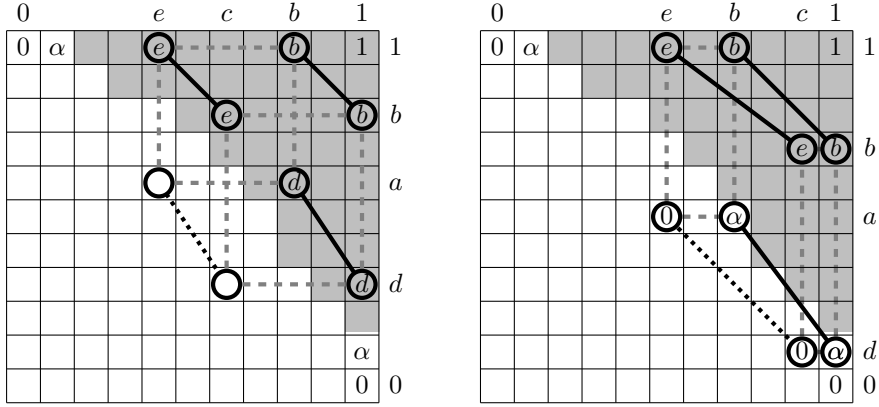


\begin{center}

\figarch

\end{center}

\caption{The prescriptions (E2) and (E3) in the construction of Rees coextensions for the Archimedean case.} \label{fig:E2-E3}

\end{figure}

For two equivalence relations $\eqo_1$ and $\eqo_2$ on a set $A$, we say that $\eqo_2$ is a {\it coarsening} of $\eqo_1$ if $\eqo_1 \subseteq \eqo_2$, that is, if each $\eqo_2$-class is a union of $\eqo_1$-classes.

\begin{lemma} \label{lem:intermediate-equivalence-relation-archimedean}

Let $(\So^2 \sep \leqcomp, \eqo, \oneone)$ be an Archimedean {\fntomonoid} partition and let $(\Se^2 \sep \leqcomp, \eqe, \oneone)$ be an Archimedean one-element coextension of $\So^2$. Furthermore, let $(\Se^2 \sep$ $\leqcomp, \eqp, \oneone)$ be the $(1,1)$-ramification of $\So^2$. Then $\eqe$ is a coarsening of $\eqp$ such that the following holds: the $\eqe$-class of each $c \in \withoutzero{\So}$ coincides with the $\eqp$-class of $c$, the $\eqe$-class of $\zeroe$ is downward closed, and each $\eqe$-class contains exactly one element of the form $(1,c)$ for some $c \in \Se$.

\end{lemma}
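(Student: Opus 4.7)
The plan is to establish the main inclusion $\eqp \subseteq \eqe$ and then derive the three accompanying properties of $\eqe$. Since $\eqp$ is by Definition \ref{def:intermediate-equivalence-relation-archimedean} the smallest equivalence relation on $\Se^2$ satisfying the closure conditions (E1)--(E4), it suffices to verify that $\eqe$ itself satisfies these four conditions, reading each occurrence of $\eqp$ as $\eqe$.

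Conditions (E1) and (E2) are straightforward. If $(a,b), (c,d) \in \support$ and $(a,b) \eqo (c,d)$, then their common product under $\mo$ lies in $\withoutzero{\So}$, where $\me$ agrees with $\mo$, so $(a,b) \eqe (c,d)$. For (E2), the hypotheses translate into $a\me b = d$ and $b\me c = e$, and associativity in $\Se$ yields $d\me c = a\me e$. The decisive step is (E3). Given $a,b,c,e \in \withoutzero{\So}$ with $(a,b) \in \cosupport$, $(b,c) \eqo e$, and $c<1$, the product $a\me b$ must lie in $\{\zeroe,\atome\}$, because $a,b \in \withoutzero{\So}$ while $(a,b) \notin \support$. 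Associativity gives $a\me e = (a\me b)\me c$, which is $\zeroe$ if $a\me b = \zeroe$ and equals $\atome \me c$ otherwise. At this point Archimedeanicity enters decisively: by Lemma \ref{lem:Archimedean-tomonoid}(iii) applied to $\Se^2$, $\atome \me c \ne \atome \me 1 = \atome$, whence $\atome \me c < \atome$ and so $\atome \me c = \zeroe$. The symmetric half of (E3) uses Lemma \ref{lem:Archimedean-tomonoid}(ii) analogously, and (E4) reduces to direct computations together with the observation that the $\eqe$-class of $\zeroe$ is downward closed because $\leq$ is compatible with $\me$: the identities $1\me\zeroe = \zeroe$, $1\me\atome = \atome$, and $\atome\me b = b\me\atome = \zeroe$ for every $b<1$ (the last again by Lemma \ref{lem:Archimedean-tomonoid}) dispatch the first sentence of (E4).

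With $\eqp \subseteq \eqe$ in hand, the remaining claims are immediate. The existence of a unique pair of the form $(1,c)$ in each $\eqe$-class is just condition (P2) for the tomonoid partition $\Se^2$. Downward closure of the $\eqe$-class of $\zeroe$ was noted in the verification of (E4). For the coincidence of the $\eqe$- and $\eqp$-classes of a given $c \in \withoutzero{\So}$, negativity of $\Se^2$ together with Lemma \ref{lem:properties-of-tomonoid-partition}(i) places the $\eqe$-class of $c$ inside $(\withoutzero{\So})^2$, so since $\me$ agrees there with $\mo$, this class coincides with the $\eqo$-class of $c$; on the other hand, conditions (E2)--(E4) only prescribe equivalences among pairs in $\cosupport$, so the $\eqp$-class of $c$ is forced only by (E1) and likewise equals the $\eqo$-class of $c$. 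The principal obstacle is step (E3), where both the coextension property $a\me b \in \{\zeroe,\atome\}$ and Archimedeanicity of $\Se^2$ must be combined to conclude $\atome \me c = \zeroe$; without the Archimedean hypothesis, $\atome \me c$ could remain nonzero and the ramification would fail to embed into every one-element coextension.
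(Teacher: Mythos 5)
Your proposal is correct and follows essentially the same route as the paper: verify that $\eqe$ itself satisfies the closure conditions (E1)--(E4) (with the Archimedean property of $\Se$ supplying $\atome \me c = \zeroe$ in the decisive step (E3)) and then invoke the minimality of $\eqp$. The only cosmetic difference is that you re-derive the coincidence of the $\eqe$-class and the $\eqo$-class of each $c \in \withoutzero{\So}$ from negativity and Lemma \ref{lem:properties-of-tomonoid-partition}(i), where the paper simply cites Proposition \ref{prop:rees-quotient-by-level-sets}.
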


\begin{proof}
Let $(\So \sep \leq, \mo, 1)$ and $(\Se \sep \leq, \me, 1)$, where $\Se = \withoutzero{\So} \disjointunion \{ \zeroe, \alpha \}$, be the two tomonoids in question.

As noted above, condition (E1) requires $\eqp$-equivalences only between elements of $\support$ and the remaining conditions require $\eqp$-equivalences only between elements of $\cosupport$. Furthermore, $\support$ is the union of the $\eqo$-classes of all $c \in \withoutzero{\So}$. By (E1), these $\eqo$-classes are also $\eqp$-classes. Moreover, by Proposition \ref{prop:rees-quotient-by-level-sets}, each $\eqo$-class of a $c \in \withoutzero{\So}$ is a $\eqe$-class. We conclude that the $\eqe$-class of each $c \in \withoutzero{\So}$ coincides with the $\eqp$-class of $c$ and $\support$ is the union of these subsets.

We next check that any two elements that are $\eqp$-equivalent according to one of the conditions (E2)--(E4) are also $\eqe$-equivalent. Since $\eqp$ is, by assumption, the smallest equivalence relation with the indicated properties, it will then follow that $\eqp \subseteq \eqe$.

Ad (E2): Let $(a,b), (b,c) \in \support$, $\;d, e \in \withoutzero{\So}$, $\;(a,b) \eqo d$, and $(b,c) \eqo e$. Then $a, b, c \in \withoutzero{\So}$, hence $a \me b = a \mo b = d$ and $b \me c = b \mo c = e$. Consequently, $d \me c = (a \me b) \me c = a \me (b \me c) = a \me e$, that is $(d,c) \eqe (a,e)$.

Ad (E3): Let $a, b, c, e \in \withoutzero{\So}$, $\;(a,b) \in \cosupport$, $\;(b,c) \eqo e$, and $c < 1$. Then $a \me b \leq \atome$ and hence $a \me e = a \me (b \me c) = (a \me b) \me c \leq \atome \me c$. As $\So$ is assumed to be Archimedean, $\atome$ is the atom of $\Se$, and $c < 1$, we conclude $\atome \me c = \zeroe$. Hence $(a,e) \eqe 0$. Similarly, we argue for the second part of (E3).

Ad (E4): As $\So$ is Archimedean, we have, for any $b < 1$, $\;0 \me 1 = 1 \me \zeroe = \atome \me b = b \me \atom = \zeroe$ by Lemma \ref{lem:Archimedean-tomonoid} and hence $(\zeroe, 1) \eqe (1, \zeroe) \eqe (\atome, b) \eqe (b, \atome)$. Furthermore, we have $(\atome, 1) \eqe (1, \atome)$. Finally, let $(a,b), (c,d) \in \cosupport$ and assume $(a,b) \leqcomp (c,d) \eqe \zeroe$. Then $a \me b \leq c \me d = \zeroe$ and thus $(a,b) \eqe \zeroe$ as well.

It is finally clear that the $\eqe$-class of $\zeroe$ is downward closed. The last statement holds by condition (P2) of a tomonoid partition.
\end{proof}

We are now ready to state the main result of this section.

\begin{theorem} \label{thm:construction-of-Archimedean-elementary-extensions}

Let $(\So^2 \sep \leqcomp, \eqo, \oneone)$ be an Archimedean {\fntomonoid} partition and let $(\Se^2 \sep \leqcomp, \eqp, \oneone)$ be the $(1,1)$-ramification of $\So^2$. Let $\eqe$ be a coarsening of $\eqp$ such that the following holds: the $\eqe$-class of each $c \in \withoutzero{\So}$ coincides with the $\eqp$-class of $c$, the $\eqe$-class of $\zeroe$ is downward closed, and each $\eqe$-class contains exactly one element of the form $(1,c)$ for some $c \in \Se$. Then $(\Se^2 \sep \leqcomp, \eqe, \oneone)$ is an Archimedean one-element coextension of $\So^2$.

Moreover, all Archimedean one-element coextensions of $\So^2$ arise in this way.

\end{theorem}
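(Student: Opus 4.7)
The plan proceeds in two parts, taking the two assertions of the theorem separately. The second assertion---that every Archimedean one-element coextension of $\So^2$ arises from such a coarsening---is exactly Lemma \ref{lem:intermediate-equivalence-relation-archimedean}, so nothing further is needed there. For the forward direction, I want to show that the candidate structure $(\Se^2 \sep \leqcomp, \eqe, \oneone)$ is an Archimedean f.n.\ tomonoid partition and that its Rees quotient by $\atome$ is $(\So^2 \sep \leqcomp, \eqo, \oneone)$. By Proposition \ref{prop:PartAx-ass-simplified} the first claim reduces to checking (P1''), (P2), and (P3'). Axiom (P2) is verbatim one of the three hypotheses on $\eqe$. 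For (P1'') I exploit a three-level stratification of $\Se^2$ produced by the construction: the $\eqe$-class of each $c \in \withoutzero{\So}$ coincides by assumption with the $\eqo$-class of $c$ and, by Lemma \ref{lem:properties-of-tomonoid-partition}(i), sits in $\{(x,y) \colon x, y \geq c\}$; the $\eqe$-class of $\atome$ is confined below all these and above the class of $\zeroe$; and the $\eqe$-class of $\zeroe$ is downward closed by hypothesis. Tracing a chain $(1,e) \eqe (a,b) \leqcomp (c,d) \eqe (1,f)$ and locating each pair in its stratum then delivers $e \leq f$.

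The main obstacle is the Reidemeister-type condition (P3'). I plan to case-split on whether $(a,b)$ and $(b,c)$ lie in $\support$ or $\cosupport$; since $\eqp \subseteq \eqe$, every equivalence already prescribed by (E1)--(E4) is automatic. When both lie in $\support$, the associativity of $\So$ (axiom (P3) for $\eqo$) combined with (E1) handles the sub-case in which $(d,c), (a,e)$ also lie in $\support$, and (E2) handles the complementary sub-case. When exactly one of $(a,b), (b,c)$ lies in $\cosupport$, the corresponding product $d$ or $e$ must be $\atome$ (because the only non-$\zeroe$ $\eqe$-class meeting $\cosupport$ is that of $\atome$, forced by (P2)); then (E3) places the other of $(d,c), (a,e)$ into the $\eqp$-class of $\zeroe$, while (E4) places the pair involving $\atome$ there as well, so both sides are $\eqe$-equivalent to $\zeroe$. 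When both $(a,b), (b,c)$ lie in $\cosupport$, then $d = e = \atome$, and (E4) together with $c, a \neq 1$ puts both $(\atome, c)$ and $(a, \atome)$ into the $\eqp$-class of $\zeroe$.

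Once the partition axioms are checked, finiteness and negativity of the corresponding tomonoid are immediate from the zero doubling extension and Lemma \ref{lem:properties-of-tomonoid-partition}(i). For Archimedeanicity I invoke Lemma \ref{lem:Archimedean-tomonoid}(ii), showing that $(b,a)$ is not $\eqe$-equivalent to $(1,a)$ for any $a \in \withoutzero{\Se}$ and $b < 1$: when $a = \atome$, (E4) puts $(b, \atome)$ in the class of $\zeroe$ while $(1, \atome)$ sits in the class of $\atome$, and these classes remain separated by (P2); when $a \in \withoutzero{\So}$, the Archimedeanicity of $\So$ (via the same lemma applied to $\eqo$) combined with the preservation of $\eqo$-classes of non-zero elements by $\eqe$ supplies the required inequivalence. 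Finally, Proposition \ref{prop:rees-quotient-by-level-sets} shows that the Rees quotient of $(\Se^2 \sep \leqcomp, \eqe, \oneone)$ by $\atome$ collapses exactly the $\eqe$-classes of $\zeroe$ and $\atome$ into one and preserves the classes of each $c \in \withoutzero{\So}$; since those latter coincide with the $\eqo$-classes of $c$, the quotient is $(\So^2 \sep \leqcomp, \eqo, \oneone)$, as required.
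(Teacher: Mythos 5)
Your proposal is correct and follows essentially the same route as the paper: reduction to (P1''), (P2), (P3') via Proposition \ref{prop:PartAx-ass-simplified}, the same four-way case analysis for (P3') driven by (E1)--(E4) and the associativity of $\So$, Lemma \ref{lem:Archimedean-tomonoid} for Archimedeanicity, Proposition \ref{prop:rees-quotient-by-level-sets} for the quotient, and Lemma \ref{lem:intermediate-equivalence-relation-archimedean} for the converse. The only item the paper proves that you omit is that $\{(1,\atome),(\atome,1)\}$ is its own $\eqp$-class (used there to guarantee that at least one admissible $\eqe$ exists), but since the theorem takes $\eqe$ as given this does not affect the correctness of your argument.
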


\begin{proof}
$\support$, defined by (\ref{fml:support}), is the union of the $\eqo$-classes of all $c \in \withoutzero{\So}$. As we have seen in the proof of Lemma \ref{lem:intermediate-equivalence-relation-archimedean}, these subsets of $\support$ are also $\eqp$-classes. Recall also that $\support$ is upwards closed and $\cosupport = \Se^2 \setminus \support$ is downward closed.

By (E4), we have $(1, \zeroe) \eqp (\zeroe, 1)$ and $(1, \atome) \eqp (\atome, 1)$. We claim that $(1, \zeroe) \noteqp (1, \atome)$. Indeed, (E1), (E2), and (E3) involve only elements $(a,b)$ such that $a, b \in \withoutzero{\So}$. Hence, none of these prescriptions involves the elements $(1, \atome)$ or $(\atome, 1)$. Moreover, by (E4), the elements $(a, \zeroe)$ and $(\zeroe, a)$ for any $a$ as well as $(a, \atome)$ and $(\atome, a)$ for any $a \neq 1$ belong to the $\eqp$-class of $(1,\zeroe)$. Again, $(1, \atome)$ and $(\atome, 1)$ are not concerned. Finally, the $\eqp$-class of $(1,\zeroe)$ is a downward closed set. Also this prescription has no effect on $(1, \atome)$ or $(\atome, 1)$ because there is no element in $\cosupport$ that is larger than $(1, \atome)$ or $(\atome, 1)$. We conclude that $\{ (1, \atome), (\atome, 1) \}$ is an own $\eqp$-class and our claim is shown.

Let now $\eqe \supseteq \eqp$ be as indicated. Note that, by what we have seen so far, at least one such equivalence relation exists. In accordance with Proposition \ref{prop:PartAx-ass-simplified}, we will verify (P1''), (P2), and (P3').

We have shown that $(1,c) \eqe (c,1)$ for all $c \in \Se$. By construction, $\eqe$ fulfils (P2). Furthermore, the $\eqe$-class of $\zeroe$ is downward closed and $\cosupport$, which is the union of the $\eqe$-classes of $\zeroe$ and $\atome$, is downward closed as well. We conclude that (P1'') holds for $\eqe$.

It remains to show that $\eqe$ fulfils (P3'). Let $a, b, c, d, e \in \So \setminus \{ \zeroe, 1 \}$ such that $(a,b) \eqe d$ and $(b,c) \eqe e$. We distinguish the following cases.

{\it Case 1.} Let $d, e \in \withoutzero{\So}$. Then $(a,b) \eqo d$ and $(b,c) \eqo e$. As $\eqo$ fulfils (P3), we have $(d,c) \eqo (a,e)$. In particular, it follows that $(d,c) \in \support$ iff $(a,e) \in \support$. If $(d,c)$ and $(a,e)$ are both in $\support$, we have $(d,c) \eqe (a,e)$ because the $\eqp$-classes contained in $\support$ are $\eqe$-classes as well. If $(d,c)$ and $(a,e)$ are both in $\cosupport$, we have $(d,c) \eqp (a,e)$ by (E2) and consequently also $(d,c) \eqe (a,e)$, because $\eqe$ extends $\eqp$.

{\it Case 2.} Let $d = \atome$ and $e \in \withoutzero{\So}$. Then $(d,c) \eqp \zeroe$ by (E4). Furthermore, we have $a \in \withoutzero{\So}$ by (E4), $b, c \in \withoutzero{\So}$ because $(b,c) \in \support$, $(a,b) \in \cosupport$, and $(b,c) \eqo e$. It follows $(a,e) \eqp \zeroe$ by (E3). Consequently, $(d,c) \eqe \zeroe \eqe (a,e)$.

{\it Case 3.} Let $d \in \withoutzero{\So}$ and $e = \atome$. We argue similarly to Case 2.

{\it Case 4.} Let $d = e = \atome$. Then $(d,c) \eqp (a,e) \eqp \zeroe$ by (E4) and consequently also $(d,c) \eqe (a,e)$.

By Proposition \ref{prop:PartAx-ass-simplified}, $(\Se^2 \sep \leqcomp, \eqe, \oneone)$ is a {\fntomonoid} partition, which is moreover Archimedean by (E4) and Lemma \ref{lem:Archimedean-tomonoid}. It is finally clear from Proposition \ref{prop:rees-quotient-by-level-sets} that the Rees quotient of $\Se^2$ by the atom $\atome$ is $\So^2$.

The final statement follows from Lemma \ref{lem:intermediate-equivalence-relation-archimedean}.
\end{proof}

Let us exhibit some features of our construction. Starting from a tomonoid partition $(\So^2 \sep \leqcomp, \eqo, \oneone)$, we determine its $(1,1)$-ramification $(\Se^2 \sep \leqcomp, \eqp, \oneone)$ by applying conditions (E1)--(E4) from Definition \ref{def:intermediate-equivalence-relation-archimedean}. These prescriptions are largely independent. This is to say that, in order to determine $\eqp$, it is not necessary to use already obtained results in a recursive way. Furthermore, to obtain a coextension of the desired type, the set $\mathcal Z = \eqecl{(1,\zeroe)}$, i.e.\ the $\eqe$-class of the bottom element, must be chosen. Theorem \ref{thm:construction-of-Archimedean-elementary-extensions} characterises $\mathcal Z$ as follows: $\mathcal Z$ is a union of $\eqp$-classes contained in $\cosupport$ including $\eqpcl{(1,\zeroe)}$ but excluding the $\eqp$-class $\{(1, \atome), (\atome, 1)\}$, and $\mathcal Z$ is downward closed. Thus, to determine a specific one-element coextension, all we have to do is to select an arbitrary set of $\eqp$-classes different from $\{(\atome, 1), (1, \atome)\}$ and $\mathcal Z$ will then be the smallest downward closed set containing them.

We may in particular mention a simple fact: the explained procedure of determining an Archimedean extension always leads to a result. That is, every Archimedean, finite, negative tomonoid has at least one Archimedean one-element coextension. Indeed, with respect to the above notation, we may always choose $\mathcal Z = \cosupport \setminus \{(\atome, 1), (1, \atome)\}$. In general, it might be found interesting that the explained procedure never requires revisions. In fact, neither the determination of $\eqp$ nor of $\eqe$ involves decisions that lead to an impossible situation, we always end up with a structure of the desired type.

\begin{remark} \label{rem:extension}

We may characterise the set of Archimedean one-element coextensions of an Archimedean {\fntomonoid} $(\So \sep \leq, \mo, 1)$ also as follows.

Recall first that with any preorder $\preorder$ on a set $A$ we can associate a partial order, called its symmetrisation. Indeed, $\preorder$ gives rise to the equivalence relation $\preorderequ$, where $a \preorderequ b$ if $a \preorder b$ and $b \preorder a$, and on the quotient $\preorderequcl{A}$, $\preorder$ induces a partial order.

Referring to the notation of Lemma {\rm \ref{lem:intermediate-equivalence-relation-archimedean}}, let $\mathcal E$ be the set of all $\eqp$-classes contained in $\cosupport$. Then $\leqcomp$ induces on $\mathcal E$ the preorder $\preorderblockp$ (cf.\ Section {\rm \ref{sec:Level-sets}}). We can describe the extensions of $\So$ with exclusive reference to the preordered set $(\mathcal E \sep \preorderblockp)$. Namely, by Theorem {\rm \ref{thm:construction-of-Archimedean-elementary-extensions}}, there is one-to-one correspondence between the Archimedean one-element coextensions of $\So$ and the extensions of the preorder $\preorderblockp$ on $\mathcal E$ to a preorder whose symmetrisation consists of two elements, one of which contains $\eqpcl{(1,0)}$ and one of which contains $\eqpcl{(1,\atome)}$.

\end{remark}

\subsection*{The commutative case}

We conclude this section by considering the commutative case. Given a commutative, Archimedean {\fntomonoid} $\So$, our question is how to determine all its commutative, Archimedean one-element coextensions.

It is clear that we may apply to this end Theorem \ref{thm:construction-of-Archimedean-elementary-extensions}. Among the one-element coextensions of $\So$ we may simply select those that are commutative. An easy criterion of commutativity is stated in Lemma \ref{lem:properties-of-tomonoid-partition}(ii).

However, it would be desirable to apply a more direct procedure, with the effect that no result must be discarded. This turns out to be easy. All we have to do is to adapt the notion of a $(1,1)$-ramification. We add in Definition \ref{def:intermediate-equivalence-relation-archimedean} the following condition:
\begin{itemize}
\axiom{E5} For any $a, b \in \Se$ such that $(a,b), (b,a) \in \cosupport$, we have $(a,b) \eqp (b,a)$.
\end{itemize}
On the basis of this modified notion of a $(1,1)$-ramification, we may reformulate Theorem \ref{thm:construction-of-Archimedean-elementary-extensions} in order to deal with the commutative case only. We omit the straightforward details.

\section{Rees coextensions: the general case} \label{sec:Extensions}

We now turn to the construction of one-element coextensions of finite, negative tomonoids without any further restriction. The procedure is slightly more involved than in the Archimedean case.

To see what makes the difference, recall that, by Lemma \ref{lem:Archimedean-tomonoid}, a characteristic feature of the procedure explained in Theorem \ref{thm:construction-of-Archimedean-elementary-extensions} was the following: both the column and the row indexed by the atom $\atome$ contain, with the exception of $(1,\atome)$ and $(\atome,1)$, solely elements in the class of $\zeroe$. In the general case, this line and row may contain further elements of the class of $\atome$. For our general construction, we have to make a decision about the division of this line and row into members of the classes of $\zeroe$ and $\atome$.

An element $\idp$ of a tomonoid is called \emph{idempotent} if $\idp \mo \idp = \idp$.

\begin{lemma} \label{lem:atom-idempotent}

Let $(\So \sep \leq, \mo, 1)$ be a non-trivial {\fntomonoid}. Let $0, \atom$ be its bottom element and its atom, respectively. Then there is an idempotent $\leftidp \geq \atom$ in $\So$ such that
\[ a \mo \atome \;=\;
\begin{cases} \zeroo & \text{if $a < \leftidp$,} \\
              \atome & \text{if $a \geq \leftidp$.} \\
\end{cases} \]
Similarly, there is an idempotent $\rightidp \geq \atome$ in $\So$ such that
\[ \atome \mo a \;=\;
\begin{cases} \zeroo & \text{if $a < \rightidp$,} \\
              \atome & \text{if $a \geq \rightidp$.} \\
\end{cases} \]
\end{lemma}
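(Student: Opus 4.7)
The plan is to isolate, for the left\-/multiplication claim, the set of elements $a\in\So$ for which $a\mo\atome=\atome$, show that this set is upward closed with a minimum, and verify that this minimum is idempotent. The right-hand claim will then follow by a symmetric argument.

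To begin, I would observe that negativity forces $a\mo\atome\in\{\zeroo,\atome\}$ for every $a\in\So$. Indeed, $a\leq 1$ gives $a\mo\atome\leq 1\mo\atome=\atome$, and since $\atome$ is the atom, there is no element strictly between $\zeroo$ and $\atome$. Thus the first claim reduces to locating a threshold element $\leftidp$ separating the two cases.

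Next I would set $L=\{a\in\So:a\mo\atome=\atome\}$. Compatibility of $\leq$ with $\mo$ makes $L$ upward closed, because if $a\in L$ and $b\geq a$ then $\atome=a\mo\atome\leq b\mo\atome\leq\atome$ forces $b\mo\atome=\atome$. Since $1\in L$, and since $\zeroo\mo\atome\leq\zeroo\mo 1=\zeroo$ gives $\zeroo\notin L$ (using that $\So$ is non\-/trivial, so $\zeroo<\atome$), the finite chain $\So$ yields a minimum $\leftidp$ of $L$ with $\leftidp>\zeroo$, and therefore $\leftidp\geq\atome$. The piecewise description of $a\mo\atome$ in the statement follows at once from the definition of $L$ and its upward closedness.

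The decisive step, which I expect to be the only real subtlety, is to show that $\leftidp$ is idempotent. A short induction using associativity gives $\leftidp^n\mo\atome=\atome$ for all $n\geq 1$, so $\leftidp^n\in L$. By negativity, the sequence $\leftidp\geq\leftidp^2\geq\leftidp^3\geq\cdots$ is weakly decreasing in the finite set $\So$ and therefore stabilises at some index, say $\leftidp^m=\leftidp^{m+1}$. Since $\leftidp^m\leq\leftidp$ and $\leftidp^m\in L$, the minimality of $\leftidp$ in $L$ forces $\leftidp^m=\leftidp$; combining this with $\leftidp^{m+1}=\leftidp\mo\leftidp^m=\leftidp\mo\leftidp$ yields $\leftidp=\leftidp^2$. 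Finally, the assertion about $\rightidp$ is obtained verbatim by swapping the roles of left and right multiplication throughout.
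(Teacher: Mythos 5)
Your proof is correct and follows essentially the same route as the paper: define $\leftidp$ as the least element of the upward-closed set $L=\{a\in\So: a\mo\atome=\atome\}$ and deduce idempotency from minimality. The only difference is that your stabilisation-of-powers detour is unnecessary: since $\leftidp\mo\leftidp\mo\atome=\leftidp\mo\atome=\atome$, minimality gives $\leftidp\mo\leftidp\geq\leftidp$, while negativity gives the reverse inequality, so $\leftidp\mo\leftidp=\leftidp$ at once.
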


\begin{proof}

We have $\zeroo \mo \atom = \zeroo$ and $1 \mo \atom = \atom$. Let $\leftidp \in \So$ be the smallest element $a \in \So$ such that $a \mo \atom = \atom$. Evidently, $\leftidp$ is not the bottom element. Furthermore, $\leftidp \mo \leftidp \mo \atom = \atom$ and, by the minimality of $\leftidp$, it follows $\leftidp \mo \leftidp = \leftidp$, that is, $\leftidp$ is an idempotent. The second part is proved analogously.
\end{proof}

Given a tomonoid $\So$, let us call the pair $(\leftidp, \rightidp)$ of idempotents, as specified in Lemma \ref{lem:atom-idempotent}, \emph{atom-characterising}. The notion is to indicate that these two elements uniquely determine the inner left and right translation associated with the atom of $\So$.

For the construction of a one-element coextension $\Se$ of a {\fntomonoid} $\So$, we will fix the atom-characterising idempotents in advance. Note that we can identify each non-zero idempotent $e$ of $\Se$ with an idempotent element of $\So$. In fact, either $e \in \withoutzero{\So}$ and hence $e$ is a non-zero idempotent of $\So$, or $e$ is the atom of $\Se$, in which case we can identify $e$ with the bottom element of $\So$. We say that $\Se$ is a one-element coextension of $\So$ \emph{with respect to $(\leftidp, \rightidp)$} if $(\leftidp, \rightidp)$ is a pair of idempotents of $\So$ and, under the above identification, the atom-characterising pair of idempotents of $\Se$.

\begin{definition} \label{def:intermediate-equivalence-relation}

Let $(\So^2 \sep \leqcomp, \eqo, \oneone)$ be a {\fntomonoid} partition. Let $\Se = \withoutzero{\So} \disjointunion \{ \zeroe, \alpha \}$ be the zero doubling extension of $\So$. Define $\support, \cosupport \subseteq \Se^2$ according to (\ref{fml:support}).

Moreover, let $(\leftidp, \rightidp)$ be a pair of idempotents of $\So$. Let $\eqp$ be the smallest equivalence relation on $\Se^2$ such that {\rm (E1)}, {\rm (E2)}, as well as the following conditions hold:

\begin{itemize}
\axiom{E3'} \begin{itemize}
\item[\rm (a)] For any $a, b, c, e \in \withoutzero{\So}$ such that $(a,b) \in \cosupport$, $\;(b,c) \eqo e$, and $c < \rightidp$, we have $(a,e) \eqp \zeroe$.

Moreover, for any $a, b, c, d \in \withoutzero{\So}$ such that $(b,c) \in \cosupport$, $\;(a,b) \eqo d$, and $a < \leftidp$, we have $(d,c) \eqp \zeroe$.

\item[\rm (b)] For any $a, b, c, e \in \withoutzero{\So}$ such that $(a,b) \in \cosupport$, $\;(b,c) \eqo e$, and $c \geq \rightidp$, we have $(a,e) \eqp (a,b)$.

Moreover, for any $a, b, c, d \in \withoutzero{\So}$ such that $(b,c) \in \cosupport$, $\;(a,b) \eqo d$, and $a \geq \leftidp$, we have $(d,c) \eqp (b,c)$.

\item[\rm (c)] For any $a, b, c > \zeroe$ such that $(a,b), (b,c) \in \cosupport$, $\;a < \leftidp$, and $c \geq \rightidp$ we have $(a,b) \eqp \zeroe$.

Moreover, for any $a, b, c > \zeroe$ such that $(a,b), (b,c) \in \cosupport$, $\;a \geq \leftidp$, and $c < \rightidp$ we have $(b,c) \eqp \zeroe$.
\end{itemize}

\axiom{E4'} \begin{itemize}
\item[\rm (a)] We have $(1, \zeroe) \eqp (\zeroe, 1) \eqp (a,\atome) \eqp (\atome,b)$ for any $a < \leftidp$ and $b < \rightidp$. Moreover, for any $(a,b), (c,d) \in \cosupport$ such that $(a,b) \leqcomp (c,d) \eqp \zeroe$, we have $(a,b) \eqp \zeroe$.

\item[\rm (b)] We have $(1, \atome) \eqp (\atome, 1) \eqp (\leftidp, \atome) \eqp (\atome, \rightidp)$. Moreover, for any $(a,b), (c,d) \in \cosupport$ such that $(a,b) \geqcomp (c,d) \eqp \atome$, we have $(a,b) \eqp \atome$.
\end{itemize}

\end{itemize}

Then we call the structure $(\Se^2 \sep \leqcomp, \eqp, \oneone)$ the {\it $(\leftidp, \rightidp)$-ramification} of $(\So^2 \sep \leqcomp, \eqo,$ $\oneone)$.

\end{definition}

We see that Definition \ref{def:intermediate-equivalence-relation} is largely analogous to Definition \ref{def:intermediate-equivalence-relation-archimedean}. See Figure \ref{fig:E3prime} for an illustration of conditions (E3')(a) and (b).

\begin{figure}[ht!]
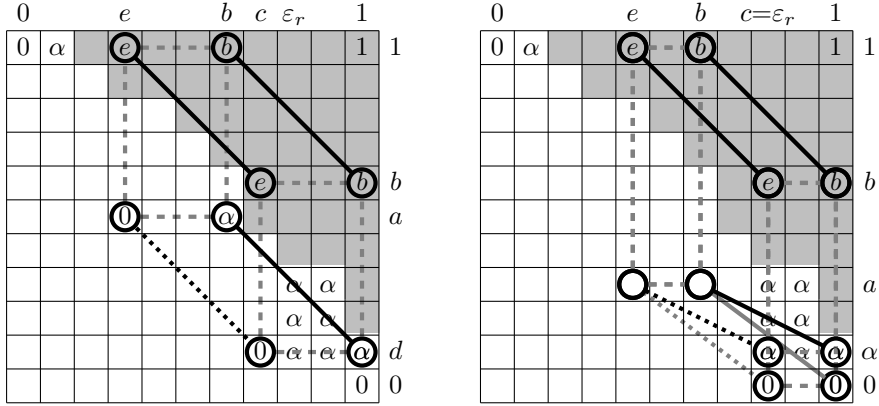


\begin{center}

\fignonarch

\end{center}

\caption{The prescriptions (E3')(a), (b) in the construction of one-element coextensions for the general case.} \label{fig:E3prime}

\end{figure}

Note that Definition \ref{def:intermediate-equivalence-relation-archimedean} is contained as a special case in Definition \ref{def:intermediate-equivalence-relation}. To see that both definitions are consistent, assume that $\So$ is an Archimedean {\fntomonoid}. It is immediate that in case $\leftidp = \rightidp = 1$ most conditions of Definitions \ref{def:intermediate-equivalence-relation} coincide with those of Definition \ref{def:intermediate-equivalence-relation-archimedean}. The only difference is that condition (E4')(b) does not possess an analog in (E4). In fact, (E4) does not require that the $\eqp$-class of the atom $\atom$ of $\Se$ is an upward closed subset of $\cosupport$. But we have seen in the proof of Theorem \ref{thm:construction-of-Archimedean-elementary-extensions} that the $\eqp$-class of $\atom$ is $\{ (1, \atom), (\atom, 1) \}$ and this set trivially fulfills the condition in question.

\begin{lemma} \label{lem:intermediate-equivalence-relation}

Let $(\So^2 \sep \leqcomp, \eqo, \oneone)$ be a {\fntomonoid} partition and let $(\Se^2 \sep \leqcomp, \eqe, \oneone)$ be a one-element coextension of $\So^2$ with respect to the idempotents $(\leftidp, \rightidp)$. Furthermore, let $(\Se^2 \sep \leqcomp, \eqp, \oneone)$ be the $(\leftidp, \rightidp)$\-/ramification of $\So$. Then $\eqe$ is a coarsening of $\eqp$ such that the following holds: the $\eqe$-class of each $c \in \withoutzero{\So}$ coincides with the $\eqp$-class of $c$, the $\eqe$-class of $\zeroe$ is downward closed, and each $\eqe$-class contains exactly one element of the form $(1,c)$ for some $c \in \Se$.

\end{lemma}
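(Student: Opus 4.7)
The plan is to follow the template of Lemma~\ref{lem:intermediate-equivalence-relation-archimedean}, demonstrating that $\eqp \subseteq \eqe$ by verifying that every pair forced to be $\eqp$-equivalent by (E1), (E2), (E3'), or (E4') is in fact $\eqe$-equivalent; minimality of $\eqp$ then yields the inclusion. The three concluding structural properties would be established as follows: that the $\eqe$-class of each $c \in \withoutzero{\So}$ coincides with the $\eqp$-class of $c$ would drop out of Proposition~\ref{prop:rees-quotient-by-level-sets} together with (E1); downward closure of the $\eqe$-class of $\zeroe$ would follow from (P1'') applied to the tomonoid partition $\Se^2$; and uniqueness of a representative of the form $(1,c)$ inside each $\eqe$-class would simply be (P2).

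Writing $\mo$ for the operation of $\So$ and $\me$ for the operation of $\Se$, the starting point is the observation, inherited from the Archimedean proof, that $\support$ is the union of the $\eqo$-classes of all $c \in \withoutzero{\So}$, and that these coincide with the corresponding $\eqe$-classes; this immediately discharges (E1). Condition (E2) is verified verbatim as in Lemma~\ref{lem:intermediate-equivalence-relation-archimedean} by reading off $d \me c = (a \me b) \me c = a \me (b \me c) = a \me e$ from associativity, using $a,b,c,d,e \in \withoutzero{\So}$. The remaining conditions (E3') and (E4') are where the idempotents $\leftidp, \rightidp$ enter.

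The key fact I would exploit is that for any $(a,b) \in \cosupport$ we have $a \me b \in \{\zeroe, \atome\}$, and by Lemma~\ref{lem:atom-idempotent} the products $\atome \me c$ and $a \me \atome$ are governed by comparing $c$ with $\rightidp$ and $a$ with $\leftidp$. For (E3')(a), computing $a \me e = (a \me b) \me c$ and noting that $\atome \me c = \zeroe$ whenever $c < \rightidp$ forces the product to $\zeroe$ in both subcases $a \me b = \zeroe$ and $a \me b = \atome$; for (E3')(b), since $c \geq \rightidp$ gives $\atome \me c = \atome$, in either subcase $a \me e$ equals $a \me b$, so $(a,e) \eqe (a,b)$. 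The mirrored halves of (a) and (b) are handled symmetrically using $\leftidp$. For (E4')(a), Lemma~\ref{lem:atom-idempotent} directly gives $a \me \atome = \atome \me b = \zeroe$ whenever $a < \leftidp$ and $b < \rightidp$, while $\zeroe \me 1 = 1 \me \zeroe = \zeroe$ is trivial; the downward-closure tail follows by monotonicity of $\me$. For (E4')(b) the same lemma yields $\leftidp \me \atome = \atome \me \rightidp = \atome$ and $1 \me \atome = \atome \me 1 = \atome$, and the upward-closure tail follows because any $(a,b) \in \cosupport$ with $a \me b \geqcomp \atome$ must have $a \me b = \atome$, since $\{\zeroe, \atome\}$ are the only options.

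The main obstacle I anticipate is (E3')(c), which has no counterpart in the Archimedean case. Here I would argue by contradiction: assume $a < \leftidp$, $c \geq \rightidp$, both $(a,b), (b,c) \in \cosupport$, and $a \me b = \atome$. Then $(a \me b) \me c = \atome \me c = \atome$ by the choice of $\rightidp$, whereas on the other side $b \me c \in \{\zeroe, \atome\}$, and in either case $a \me (b \me c) = \zeroe$, since $a < \leftidp$ forces $a \me \atome = \zeroe$. This violates associativity, so $a \me b = \zeroe$ and hence $(a,b) \eqe \zeroe$. The mirrored prescription is handled by swapping the roles of $\leftidp$ and $\rightidp$. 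With $\eqp \subseteq \eqe$ now in hand, the three structural conclusions listed in the lemma follow as outlined in the first paragraph.
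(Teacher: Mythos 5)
Your proposal is correct and follows essentially the same route as the paper's own proof: establish $\eqp\subseteq\eqe$ by checking each generating prescription (E1), (E2), (E3'), (E4') against associativity of $\me$ and Lemma \ref{lem:atom-idempotent}, including the same contradiction argument for (E3')(c) (assuming $a\me b=\atome$ forces $(a\me b)\me c=\atome$ while $a\me(b\me c)=\zeroe$), and then read off the three structural conclusions from Proposition \ref{prop:rees-quotient-by-level-sets}, monotonicity, and (P2). No gaps.
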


\begin{proof}

We denote again by $\mo$ and $\me$ the monoidal operations on $\So$ and $\Se = \withoutzero{\So} \disjointunion \{ \zeroe, \atome \}$, respectively.

We proceed similarly as in the proof of Lemma \ref{lem:intermediate-equivalence-relation-archimedean} to see that, for each $c \in \withoutzero{\So}$, the $\eqo$-class of $c$ coincides with the $\eqp$-class as well as with the $\eqe$-class of $c$, and $\support$ is the union of these sets.

To show that $\eqe$ is a coarsening of $\eqp$, we check that any $\eqp$-equivalence according to (E2), (E3'), or (E4') is an $\eqe$-equivalence as well. For (E2), see the proof of Lemma \ref{lem:intermediate-equivalence-relation-archimedean}. Furthermore, for (E4'), the argument is obvious from Lemma \ref{lem:atom-idempotent}. Thus we mention only the case of (E3'):

Ad (E3')(a): Let $a, b, c, e \in \withoutzero{\So}$, $(a,b) \in \cosupport$, $(b,c) \eqo e$, and $c < \rightidp$. Then $a \me b \leq \atome$ and $b \me c = e$ and hence $a \me e = a \me (b \me c) = (a \me b) \me c \leq \atome \me c = \zeroe$. Hence $(a,e) \eqe 0$. Similarly, we argue for the second part of (E3')(a).

Ad (E3')(b): Let $a, b, c, e \in \withoutzero{\So}$, $(a,b) \in \cosupport$, $(b,c) \eqo e$, and $c \geq \rightidp$. Then $a \me b = \zeroe$ or $a \me b = \atome$. In the former case we have $a \me e = a \me b \me c = \zeroe \me c = \zeroe$, and in the latter case we have $a \me e = a \me b \me c = \atome \me c = \atome$. We conclude $(a,e) \eqe (a,b)$. Similarly, we argue for the second part of (E3')(b).

Ad (E3')(c): Let $a, b, c \geq \atome$, $\;(a,b), (b,c) \in \cosupport$, $\;a < \leftidp$, and $c \geq \rightidp$. Assume that $a \me b = \atome$. Then $(a \me b) \me c = \atome \me c = \atome$, but $a \me (b \me c) \leq a \me \atome = \zeroe$. We conclude $a \me b = \zeroe$, that is, $(a,b) \eqe \zeroe$. Similarly, we argue for the second part of (E3')(c).

We complete the proof like in case of Lemma \ref{lem:intermediate-equivalence-relation-archimedean}.
\end{proof}

\begin{theorem} \label{thm:construction-of-elementary-extensions}

Let $(\So^2 \sep \leqcomp, \eqo, \oneone)$ be a {\fntomonoid} partition, let $(\leftidp, \rightidp)$ be a pair of idempotents of $\So$, and let $(\Se^2 \sep \leqcomp, \eqp, \oneone)$ be the $(\leftidp, \rightidp)$-ramification of $\So^2$.

If $(1,0) \eqp (1,\atome)$, there is no one-element coextension of $\So^2$ with respect to $(\leftidp, \rightidp)$.

Assume $(1,0) \noteqp (1,\atome)$. Let $\eqe$ be a coarsening of $\eqp$ such that the following holds: the $\eqe$-class of each $c \in \withoutzero{\So}$ coincides with the $\eqp$-class of $c$, the $\eqe$-class of $\zeroe$ is downward closed, and each $\eqe$-class contains exactly one element of the form $(1,c)$ for some $c \in \Se$. Then $(\Se^2 \sep \leqcomp, \eqe, \oneone)$ is a one-element coextension of $\So^2$ with respect to $(\leftidp, \rightidp)$.

Moreover, all one-element coextensions of $\So^2$ with respect to $(\leftidp, \rightidp)$, if there are any, arise in this way.

\end{theorem}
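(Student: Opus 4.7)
The plan is to proceed in close analogy with the proof of Theorem \ref{thm:construction-of-Archimedean-elementary-extensions}, using Lemma \ref{lem:intermediate-equivalence-relation} to dispose of the ``only if'' direction and the impossibility claim simultaneously. Indeed, if $(\Se^2 \sep \leqcomp, \eqe, \oneone)$ is a one-element coextension with respect to $(\leftidp, \rightidp)$, then by that lemma $\eqe$ coarsens $\eqp$, agrees with $\eqp$ on $\support$, has a downward closed $\zeroe$-class, and meets each row $(1, c)$ exactly once. Because $0 \neq \atome$, the elements $(1, \zeroe)$ and $(1, \atome)$ must then lie in different $\eqe$-classes, and hence also in different $\eqp$-classes; this settles the first claim and the final ``moreover'' clause.

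For the forward direction, assume $(1, \zeroe) \noteqp (1, \atome)$ and let $\eqe$ be as in the hypothesis. I would verify that $(\Se^2 \sep \leqcomp, \eqe, \oneone)$ satisfies (P1''), (P2), (P3') of Proposition \ref{prop:PartAx-ass-simplified}, and then read off that its Rees quotient by $\atome$ is $\So^2$ and that the atom-characterising pair of idempotents of $\Se$ is $(\leftidp, \rightidp)$. Axiom (P2) is immediate from the hypothesis on $\eqe$. For (P1''), note that $\support$ is upward closed and $\cosupport$ downward closed, that the $\eqe$-class of $\zeroe$ lies in $\cosupport$ and is downward closed by assumption, and that the $\eqe$-class of $\atome$ is upward closed inside $\cosupport$ by (E4')(b); combined with the regularity inherited from $\eqo$ on $\support$, this yields (P1'') on all of $\Se^2$.

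The heart of the proof is the verification of (P3'), which I would carry out by the same case analysis as in the Archimedean case, enriched by subcases governed by the position of $a, b, c$ relative to $\leftidp$ and $\rightidp$. Given $a, b, c, d, e \in \Se \setminus \{\zeroe, 1\}$ with $(a,b) \eqe d$ and $(b,c) \eqe e$, the cases in which $d, e \in \withoutzero{\So}$ and both $(a,b), (b,c) \in \support$ are disposed of by (P3) for $\eqo$ (if $(d,c), (a,e)$ lie in $\support$) or by (E2) (if both lie in $\cosupport$). The genuinely new cases are those with $d = \atome$ or $e = \atome$. Say $d = \atome$ and $e \in \withoutzero{\So}$, so that $(a,b) \in \cosupport$ while $(b,c) \in \support$: splitting on whether $a < \leftidp$ or $a \geq \leftidp$ and on whether $c < \rightidp$ or $c \geq \rightidp$, the appropriate clause of (E3')(a), (b), (c) pins down the $\eqp$-class of $(a,e)$, while the $\eqp$-class of $(d,c) = (\atome, c)$ is fixed by (E4')(a) or (E4')(b); these two classes coincide. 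The essential new ingredient compared with the Archimedean case is that $(\atome, c)$ may equal $\atome$ rather than $\zeroe$, and here one appeals to the upward closedness of the $\atome$-class provided by (E4')(b). The subcases $d \in \withoutzero{\So}$, $e = \atome$ and $d = e = \atome$ are handled symmetrically. I expect the main obstacle to lie precisely in this bookkeeping: none of the subcases requires a new idea, but all of them must be checked against the specific clause of (E3') and (E4') that applies.

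Once (P1''), (P2), (P3') are established, Proposition \ref{prop:PartAx-ass-simplified} yields that $(\Se^2 \sep \leqcomp, \eqe, \oneone)$ is a f.n.\ tomonoid partition; Proposition \ref{prop:rees-quotient-by-level-sets}, together with the fact that $\eqe$ coincides with $\eqo$ on $\support$, shows that its Rees quotient by $\atome$ is $\So^2$; and condition (E4'), together with Lemma \ref{lem:atom-idempotent}, identifies $(\leftidp, \rightidp)$ as the atom-characterising pair of idempotents of $\Se$. The final ``moreover'' clause then follows once more from Lemma \ref{lem:intermediate-equivalence-relation}.
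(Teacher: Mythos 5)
Your proposal is correct and takes essentially the same route as the paper's own proof: Lemma~\ref{lem:intermediate-equivalence-relation} disposes of the impossibility claim and the final clause, and the forward direction is the verification of (P1''), (P2), (P3') via Proposition~\ref{prop:PartAx-ass-simplified}, with (P3') checked by the same case analysis refined by the positions of $a$ and $c$ relative to $\leftidp$ and $\rightidp$. The one subcase needing slightly more than the bookkeeping you describe is $d=\atome$, $a=\atome$, $c\geq\rightidp$, where the paper first rules out $b<\rightidp$ via (E4')(a) and then uses the idempotency of $\rightidp$ to get $e=b\mo c\geq\rightidp\mo\rightidp=\rightidp$ before (E4')(b) can be applied to $(\atome,e)$; otherwise your outline matches the paper step for step.
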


\begin{proof}
$\support$ is the union of the $\eqo$-classes of all $c \in \withoutzero{\So}$. We argue like in the previous cases that these subsets of $\support$ are also $\eqp$-classes. Note again that $\cosupport = \Se^2 \setminus \support$ is downward closed.

Assume that $(1, \zeroe) \eqp (1, \atome)$. If there was a one-element coextension $(\Se^2 \sep \leqcomp, \eqe, \oneone)$ of $\So^2$ with respect to $(\leftidp, \rightidp)$, we would have $\eqp \subseteq \eqe$ by Lemma \ref{lem:intermediate-equivalence-relation}. This means that $\eqe$ would violate (P2). Thus no such extension exists.

Assume now that $(1, \zeroe) \noteqp (1, \atome)$. Then $(1, \zeroe)$ and $(\zeroe, 1)$ on the one hand, and $(1,\atome)$, $(\atome,1)$, $(\leftidp, \atome)$, and $(\atome, \rightidp)$ on the other hand, are in distinct $\eqp$-classes. Obviously, an equivalence relation $\eqe \supseteq \eqp$ then exists as indicated. We readily see that $\eqe$ fulfils (P1'') and (P2).

We shall show that $\eqe$ fulfils also (P3'). Let $a, b, c, d, e \in \So \setminus \{ \zeroe, 1 \}$ such that $(a,b) \eqe d$ and $(b,c) \eqe e$. We have to show $(d,c) \eqe (a,e)$. To this end, we distinguish a number of cases and subcases.

{\it Case 1.} Let $d, e \in \withoutzero{\So}$. Then we proceed like in the proof of Theorem \ref{thm:construction-of-Archimedean-elementary-extensions} to conclude that $(d,c) \eqe (a,e)$.

{\it Case 2.} Let $d = \atome$ and $e \in \withoutzero{\So}$. Then $(a,b) \in \cosupport$ and $(b,c) \in \support$. Furthermore, we have $(b,c) \eqo e$ and $\atome < e \leq b, c$. We distinguish four subcases.

\mbox{}\hfill
\begin{minipage}{0.98\textwidth}\parindent0pt\parskip1ex
{\it Case a.} Let $a \in \withoutzero{\So}$ and $c < \rightidp$. Then $(d,c) = (\atome, c) \eqp \zeroe$ by (E4')(a). Furthermore, $(a,e) \eqp \zeroe$ by (E3')(a). It follows $(d,c) \eqe \zeroe \eqe (a,e)$.

{\it Case b.} Let $a = \atome$ and $c < \rightidp$. Then, again by (E4')(a), $(d,c) = (\atome, c) \eqp \zeroe$. Moreover, $(a,e) = (\atome,e) \eqp \zeroe$ by (E4')(a), because $e \leq c$. It follows $(d,c) \eqe \zeroe \eqe (a,e)$.

{\it Case c.} Let $a \in \withoutzero{\So}$ and $c \geq \rightidp$. Then $(d,c) = (\atome, c) \eqp \atome$ by (E4')(b) and thus $(d,c) \eqe \atome$. Furthermore, by (E3')(b), $(a,e) \eqp (a,b)$ and thus $(a,e) \eqe (a,b) \eqe d = \atome$. In particular, $(d,c) \eqe (a,e)$.

{\it Case d.} Let $a = \atome$ and $c \geq \rightidp$. Again, $(d,c) = (\atome, c) \eqe \atome$ by (E4')(b). Moreover, $(\atome,b) = (a,b) \eqe d = \atome$. It follows that $b \geq \rightidp$ because otherwise we would have $(\atome,b) \eqp \zeroe$ by (E4')(a) and thus $(\atome,b) \eqe \zeroe$. In $\So$, it follows from $\rightidp \leq b, c$ that $\rightidp = \rightidp \mo \rightidp \leq b \mo c = e$. Consequently, by (E4')(b), $(a,e) = (\atome,e) \eqp \atome$ and hence $(a,e) \eqe \atome$. We have shown $(a,e) \eqe (d,c)$.
\end{minipage}

{\it Case 3.} Let $d \in \withoutzero{\So}$ and $e = \atome$. Then we proceed analogously to Case 2.

{\it Case 4.} Let $d = e = \atome$. Then $(a,b), (b,c) \in \cosupport$. Assume that $a < \leftidp$ and $c \geq \rightidp$. Then by (E3')(c) it follows $(a, b) \eqp \zeroe$, in contradiction to $(a, b) \eqe \atome$. Similarly, we argue in case $a \geq \leftidp$ and $c < \rightidp$. We conclude that either $a < \leftidp$ and $c < \rightidp$, or $a \geq \leftidp$ and $c \geq \rightidp$. Thus $(a, e) = (a, \atome) \eqp (\atome, c) = (d, c)$ by (E4') and hence $(a, e) \eqe (d, c)$.

By Proposition \ref{prop:PartAx-ass-simplified}, $(\Se^2 \sep \leqcomp, \eqe, \oneone)$ is a {\fntomonoid} partition. Clearly, $\Se^2$ is associated with a one-element coextension of $\So^2$ with respect to $(\leftidp, \rightidp)$.

The final statement follows from Lemma \ref{lem:intermediate-equivalence-relation}.
\end{proof}

We conclude that the construction of one-element coextensions works, on the whole, for general {\fntomonoid}s similarly as for Archimedean {\fntomonoid}s. In fact, the explanations given after Theorem \ref{thm:construction-of-Archimedean-elementary-extensions} for the Archimedean case as well as Remark \ref{rem:extension} apply, mutatis mutandis, for the general case as well. Maybe one point is worth mentioning. In order to determine the downward closed set $\mathcal Z$, the $\eqe$-class of $\zeroo$, a set of $\eqp$-classes needs to be selected. In the Archimedean case, the $\eqp$-class $\{(\atome, 1), (1, \atome)\}$ must be disregarded. Here, in the general case, the $\eqp$-class {\it containing} $(\atome, 1)$ and $(1, \atome)$ must be disregarded instead.

The question remains if the pair of idempotents with respect to which we construct an one-element coextension can be chosen arbitrarily or not. The example shown in Figure \ref{fig:noncommutative-counterexample} implies that the answer is negative. It is an open problem how to characterise those pairs of idempotents that can be used.

However, in two cases a coextension always exists. On the one hand, there is always at least one coextension w.r.t.\ $(1,1)$. In fact, in this case we can argue like in the proof of Theorem \ref{thm:construction-of-Archimedean-elementary-extensions} to see that $\{(1,\atome), (\atome,1)\}$ is a $\eqp$-class and hence $(1,\zeroe) \noteqp (1,\atome)$. We can consequently choose, e.g., $\mathcal Z = \cosupport \setminus \{(1,\atome), (\atome,1)\}$. On the other hand, there is always exactly one extension w.r.t.\ $(\zeroo, \zeroo)$, where $\zeroo$ is the bottom element of $\So$. Then both atom-characterising idempotents of $\Se$ are $\atome$ and hence $\mathcal Z$ is necessarily the smallest possible set, namely, $\mathcal Z = \{ (a,b) \colon a = \zeroe \text{ or } b = \zeroe \}$. The new atom $\atom$ will be idempotent and the construction may be regarded as the ordinal sum of the original tomonoid and the two-element tomonoid whose monoidal product is the infimum.

\begin{figure}[ht!]

\begin{center}

\figcntex

\end{center}

\caption{Left: A 4-element tomonoid $\So$. The idempotents of $\So$ are $0$, $z$, and $1$. $\So$ does not possess a one-element coextension with respect to the pair of idempotents $(z,1)$. Middle: By (E4')(b), we have $(z,\atom) \eqp \atom$ and furthermore $(z,y) \eqp \atom$. Right: By (E4')(a) we have $(\atom,z) \eqp 0$ and by (E3')(a) we have $(z,y) \eqp 0$. We conclude that $(1,\zeroe) \eqp (1,\atome)$ in this case.
\label{fig:noncommutative-counterexample}}

\end{figure}

\subsection*{The commutative case}

Again, let us check which modifications of our procedure are necessary to deal with the commutative case.

Let $\Se$ be a commutative {\fntomonoid}. Then a one-element coextension of $\Se$ with respect to a pair $(\leftidp,\rightidp)$ of idempotents can obviously be commutative only if $\leftidp = \rightidp$. Consequently, we have to restrict Definition \ref{def:intermediate-equivalence-relation} to this case. In Definition \ref{def:intermediate-equivalence-relation}, we furthermore add again condition (E5). On the basis of these two modification, we may obtain an analogous version of Theorem \ref{thm:construction-of-elementary-extensions}, tailored to the commutative case.

\section{Conclusion} \label{sec:Conclusion}

In recent years, there has been a considerable interest in the structure of negative totally ordered monoids (tomonoids), which, for instance, occur in the context of fuzzy logic. Mostly, the commutative case has been studied. In particular, a classification of MTL-algebras is considered as an important aim. In the present paper, we focus on the finite case. Moreover, commutativity is not assumed. Our aim is to contribute to a better understanding of finite, negative (f.n.) tomonoids.

To this end, we have described the set of one-element Rees coextensions of a \fntomonoid, that is, the set of those {\fntomonoid}s whose Rees quotient by the atom is the original tomonoid. It has turned out convenient to employ in this context the level-set representation of tomonoids. We have thus worked with certain partitions of a set $\So^2$, where $\So$ is a finite chain. The construction consists of two steps and describes the extensions in a transparent, geometrically intuitive, and efficient way.

Among the open questions, we may mention the following. For an extension of a \fntomonoid, a pair of idempotents needs to be chosen in advance. Not all combinations, however, are possible and it is unclear how to characterise those pairs that are actually allowed.

Moreover, the tomonoids considered in this paper are finite and negative. Remarkably, finiteness is not an essential condition of our method. Alternatively, we could restrict to the assumption that the tomonoid has a bottom element. The extended tomonoid would then again consist of the non-zero elements of the original tomonoid together with a new pair of elements. The situation is more difficult, however, with regard to negativity. To generalise our method to the non-negative case would require major modifications. A related problem is the extension of our method to totally ordered semigroups, which do not necessarily possess an identity.

Finally, our description of a finite, negative tomonoid is relative to a tomonoid that is by one element smaller. For the sake of a classification of all finite, negative tomonoids it would certainly be desirable to understand the construction process not just step by step, but as a whole. This concern certainly implies the need for an approach going well beyond of what we have proposed in the present work.

%%%%%%%%%%%%%%%%%%%%%%%%%%%%%%%%%%%%%%%%%%%%%%%%%%%%%%%%%%%%%%%%%%%%%%%%%%%%%%%%%%%%%%%

\subsubsection*{Acknowledgements.} 
M.~P.\ was supported by Project P201/12/P055 of the Czech Science Foundation and the ESF Project CZ.1.07/2.3.00/20.0051 of the Masaryk University (Algebraic methods in Quantum Logic). Th.~V.\ was supported by the Austrian Science Fund (FWF): project I 1923-N25 (New perspectives on residuated posets).

We would moreover like to thank the anonymous reviewers for their constructive criticism, which led us to a considerable improvement of our work.

%%%%%%%%%%%%%%%%%%%%%%%%%%%%%%%%%%%%%%%%%%%%%%%%%%%%%%%%%%%%%%%%%%%%%%%%%%%%%%%%%%%%%%%

\end{document}